\documentclass[12pt]{article}

\usepackage{amsmath,amssymb,amsthm}
\usepackage{hyperref}

\newtheorem{theorem}{Theorem}
\newtheorem{lemma}[theorem]{Lemma}
\newtheorem{proposition}[theorem]{Proposition}
\newtheorem{corollary}[theorem]{Corollary}
\theoremstyle{definition}
\newtheorem{definition}[theorem]{Definition}
\newtheorem{example}[theorem]{Example}
\theoremstyle{remark}
\newtheorem{remark}[theorem]{Remark}

\newcommand{\seqnum}[1]{\href{https://oeis.org/#1}{\rm \underline{#1}}}

\begin{document}

\title{Closed-Form Evaluation of $\sum_{n=2}^{\infty}\operatorname{arctanh}(n^{-k})$
via Infinite Products}

\author{Ryan Goulden}

\maketitle

\begin{abstract}
We establish closed-form expressions for the infinite series
$\sum_{n=2}^{\infty}\operatorname{arctanh}(n^{-k})$ for all integers
$k \ge 2$ by connecting these sums to infinite product formulas involving the
gamma function.  Our approach uses logarithmic manipulations,
the Fubini--Tonelli theorem, and Frullani's integral theorem.  As applications, we
derive a structural identity relating the Riemann zeta function $\zeta(k)$ to these
sums, establish a new series representation for the Euler--Mascheroni
constant~$\gamma$, and show that this representation admits an exponentially
convergent reformulation via zeta values.  We further prove that the sum
$h(k) = \sum_{n=2}^{\infty}\operatorname{arctanh}(n^{-k})$ is a strictly
decreasing, strictly convex function of~$k$ and establish explicit two-sided
bounds and asymptotic expansions.  We show that $k=3$ is the unique integer
$k \ge 2$ for which $f(k)/(2g(k))$ is rational, yielding
$h(3)=\frac{1}{2}\ln(3/2)$; Baker's theorem then implies that $h(3)$ is
transcendental.  As a corollary, Ap\'ery's constant admits the representation
$\zeta(3)=1+\frac{1}{2}\ln(3/2)-C(3)$, where $C(3)$ converges geometrically.
The decimal expansions of the closed-form values and several auxiliary sequences
arising from these identities are cataloged in the OEIS.
\end{abstract}

\section{Introduction}\label{sec:intro}

The evaluation of infinite series involving inverse hyperbolic functions has a rich
history in analysis and number theory.  While series of the form
$\sum_{n=1}^{\infty}n^{-s}$ are thoroughly understood through the theory of the
Riemann zeta function, series involving $\operatorname{arctanh}(n^{-k})$ have
received comparatively less systematic treatment.

In this paper, we establish explicit closed-form expressions for
\begin{equation}\label{eq:hk-def}
  h(k) := \sum_{n=2}^{\infty}\operatorname{arctanh}\!\left(\frac{1}{n^k}\right),
  \qquad k \ge 2.
\end{equation}
Our approach connects these sums to certain infinite products whose exact values
were determined by Cantrell~\cite{cantrell} in terms of the gamma function and
elementary trigonometric functions; these product identities rest ultimately on the
Weierstrass factorization theorem for $1/\Gamma(z)$ and the
reflection and multiplication formulas for the gamma function (see, e.g.,
Prudnikov, Brychkov, and Marichev~\cite{prudnikov}, Borwein, Bailey, and
Girgensohn~\cite{borwein}, and the NIST Digital Library~\cite{dlmf}).

The sequence of values $h(2), h(3), h(4), \ldots$ begins
\[
  0.650923\ldots,\quad 0.202733\ldots,\quad 0.082405\ldots,\quad
  0.036938\ldots,\quad 0.017344\ldots\,,
\]
and the decimal expansion of $\ln(\sinh(\pi)/\pi) = 2h(2)$ appears in
the OEIS as \seqnum{A393668}.  We shall prove
(Theorem~\ref{thm:h3-transcendence}) that
$h(3) = \frac{1}{2}\ln(3/2)$ and that this is the unique value of~$k$ for which
$h(k)$ is the logarithm of a rational number.

\subsection{Main results}

Our principal result is the following closed-form evaluation.

\begin{theorem}\label{thm:main}
For all integers $k \ge 2$,
\begin{equation}\label{eq:main}
  \sum_{n=2}^{\infty}\operatorname{arctanh}\!\left(\frac{1}{n^k}\right)
  = \frac{1}{2}\ln\!\left(\frac{g(2k)}{g(k)^2}\right),
\end{equation}
where
\begin{equation}\label{eq:gk}
  g(k) := \prod_{n=2}^{\infty}\!\left(1 - \frac{1}{n^k}\right)
\end{equation}
admits explicit formulas in terms of the gamma function (for odd~$k$)
or trigonometric functions (for even~$k$); see Theorem~\ref{thm:cantrell}.
Equivalently, defining
\begin{equation}\label{eq:fk}
  f(k) := \prod_{n=1}^{\infty}\!\left(1 + \frac{1}{n^k}\right),
\end{equation}
we have
\begin{equation}\label{eq:main-f}
  h(k) = \frac{1}{2}\ln\!\left(\frac{f(k)}{2\,g(k)}\right),
\end{equation}
since $f(k) = 2g(2k)/g(k)$.
\end{theorem}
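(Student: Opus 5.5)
The plan is to work termwise with the logarithmic form of $\operatorname{arctanh}$ and then identify the resulting products with $g$ and $f$. For $0<x<1$ we have $\operatorname{arctanh}(x)=\frac12\ln\frac{1+x}{1-x}$. With $x=n^{-k}$ this gives, for each $n\ge 2$,
\[
\operatorname{arctanh}(n^{-k})=\tfrac12\Bigl[\ln\bigl(1+n^{-k}\bigr)-\ln\bigl(1-n^{-k}\bigr)\Bigr].
\]
Using $(1+n^{-k})(1-n^{-k})=1-n^{-2k}$, we rewrite the bracket as
\[
\ln\bigl(1-n^{-2k}\bigr)-2\ln\bigl(1-n^{-k}\bigr).
\]
Each term is then a combination of logarithms of factors appearing in $g(2k)$ and $g(k)$.

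Next we justify splitting the sum. For $k\ge2$ and $n\ge2$ we have $0<n^{-k}\le 1/4$, and $|\ln(1-t)|\le 2t$ on $[0,1/2]$. Hence both $\sum_{n\ge2}\ln(1-n^{-k})$ and $\sum_{n\ge2}\ln(1-n^{-2k})$ converge absolutely, by comparison with $\sum n^{-k}<\infty$. So the series $h(k)$ splits into two convergent series, and summing gives
\[
h(k)=\tfrac12\Bigl[\ln g(2k)-2\ln g(k)\Bigr].
\]
Here $\ln$ commutes with the infinite products because $g(k)$ and $g(2k)$ converge to positive limits; this follows from the same absolute convergence, since no factor vanishes. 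This yields \eqref{eq:main}.

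For the equivalent form \eqref{eq:main-f}, we verify $f(k)=2g(2k)/g(k)$. The $n=1$ factor of $f(k)$ is $2$, so $f(k)=2\prod_{n\ge2}(1+n^{-k})$. For $n\ge2$ we write $1+n^{-k}=(1-n^{-2k})/(1-n^{-k})$. Since both products converge to nonzero limits, the quotient of the partial products converges to $g(2k)/g(k)$. Substituting gives
\[
\frac{g(2k)}{g(k)^2}=\frac{f(k)}{2g(k)},
\]
which is \eqref{eq:main-f}.

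The only real subtlety is rigor in the interchange: splitting one series into two and passing the logarithm through infinite products. This needs absolute convergence and nonvanishing of the factors, which the bound $|\ln(1-t)|\le 2t$ supplies. The explicit gamma and trigonometric evaluations are simply inherited from Theorem~\ref{thm:cantrell} and play no role in the identity itself.
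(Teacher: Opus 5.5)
Your proof is correct and essentially the paper's argument: the logarithmic form of $\operatorname{arctanh}$, absolute convergence to move the sum inside the logarithm, and the factorization $1-n^{-2k}=(1-n^{-k})(1+n^{-k})$ linking $f$ and $g$. The only differences are that you derive the $g$-form \eqref{eq:main} first and then deduce \eqref{eq:main-f}, while the paper goes in the opposite order, and that your bound $|\ln(1-t)|\le 2t$ makes the convergence justification slightly more explicit.
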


The formulation~\eqref{eq:main} in terms of $g$ alone is the more natural
one: it shows that $h(k)$ measures the deviation of $g$ from a doubling
law $g(2k) = g(k)^2$.  Equivalently,
\begin{equation}\label{eq:doubling}
  g(2k) = g(k)^2\,e^{2h(k)}.
\end{equation}

The sum $h(k)$ admits the equivalent zeta-series representation
\begin{equation}\label{eq:hk-zeta}
  h(k) = \sum_{m=0}^{\infty}\frac{\zeta\bigl((2m+1)k\bigr)-1}{2m+1},
\end{equation}
which follows from the Taylor expansion of $\operatorname{arctanh}$ and the
Fubini--Tonelli theorem (Proposition~\ref{prop:zeta-series}).

As consequences, we obtain a structural identity for the Riemann zeta function and
a new representation of the Euler--Mascheroni constant.

\begin{corollary}\label{cor:zeta}
For $k \ge 2$,
\begin{equation}\label{eq:zeta-identity}
  \zeta(k) = 1 + h(k) - C(k),
\end{equation}
where
\begin{equation}\label{eq:ck-def}
  C(k) := \sum_{n=1}^{\infty}\frac{\zeta\bigl(k(2n+1)\bigr)-1}{2n+1}
\end{equation}
is a rapidly convergent correction satisfying $C(k) \to 0$ as $k \to \infty$.
\end{corollary}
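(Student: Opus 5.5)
The plan is to derive the identity straight from the zeta-series representation \eqref{eq:hk-zeta} (Proposition~\ref{prop:zeta-series}), which we may take as given. The $m=0$ term of that series is $\zeta(k)-1$. Reindexing the remaining terms by $n=m\ge 1$ gives exactly $C(k)$ as defined in \eqref{eq:ck-def}. So
\[
  h(k) = \bigl(\zeta(k)-1\bigr) + \sum_{m=1}^{\infty}\frac{\zeta\bigl((2m+1)k\bigr)-1}{2m+1} = \zeta(k)-1+C(k),
\]
and rearranging yields \eqref{eq:zeta-identity}. Splitting off the first term is legitimate because every term is nonnegative and the full series converges: it equals $h(k)$, which is finite. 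Hence $C(k)$ is a convergent series and the subtraction is valid.

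For the quantitative claims about $C(k)$, I would use the elementary bound
\[
  \zeta(s)-1=\sum_{j\ge 2} j^{-s} \le 2^{-s}+\int_2^\infty x^{-s}\,dx = 2^{-s}\Bigl(1+\frac{2}{s-1}\Bigr) \le 3\cdot 2^{-s}
\]
for $s\ge 2$. Applying this with $s=(2n+1)k$ gives
\[
  0\le C(k)\le 3\sum_{n\ge1}\frac{2^{-(2n+1)k}}{2n+1}\le \frac{2^{-3k}}{1-2^{-2k}}.
\]
This shows that the $n$-th term decays geometrically, like $2^{-2nk}$, which is the sense of ``rapidly convergent''. It also shows that $C(k)=O(8^{-k})\to 0$ as $k\to\infty$.

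There is no real obstacle here. The only point that needs care is justifying the term separation and the domination bound uniformly in $n$, and the positivity of all terms handles both. If one wanted to avoid relying on Proposition~\ref{prop:zeta-series}, its proof is routine: expand $\operatorname{arctanh}(x)=\sum_{m\ge0} x^{2m+1}/(2m+1)$ with $x=n^{-k}\in(0,1)$, then interchange the double sum of nonnegative terms by Tonelli's theorem.
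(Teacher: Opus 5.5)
Your proof is correct and follows the paper's route: the identity comes from splitting off the $m=0$ term of the zeta-series in Proposition~\ref{prop:zeta-series}. Your decay estimate uses the same integral comparison $\zeta(s)-1\le 2^{-s}\bigl(1+\tfrac{2}{s-1}\bigr)$ that the paper uses in Theorem~\ref{thm:error}; the paper keeps the slightly sharper constant $\tfrac13\bigl(1+\tfrac{2}{3k-1}\bigr)$ where you have $1$, and it also gets $C(k)\to 0$ separately by dominated convergence in Proposition~\ref{prop:ck-properties}.
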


\begin{corollary}\label{cor:gamma}
The Euler--Mascheroni constant $\gamma$ satisfies
\begin{equation}\label{eq:gamma-rep}
  \gamma = 1 - \frac{1}{2}\ln 2
  - \sum_{n=2}^{\infty}\!\left(\operatorname{arctanh}\!\left(\frac{1}{n}\right)
  - \frac{1}{n}\right).
\end{equation}
\end{corollary}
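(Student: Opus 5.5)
We prove Corollary~\ref{cor:gamma} from the elementary expansion $\operatorname{arctanh}(x)-x=\sum_{m\ge1}x^{2m+1}/(2m+1)$ together with the classical series $\gamma=\sum_{j\ge2}(-1)^j(\zeta(j)-1)/j$ read off from zeta-series identities. The strategy is to write $S:=\sum_{n\ge2}\bigl(\operatorname{arctanh}(1/n)-1/n\bigr)$ as a zeta series. Then we identify that series as $1-\tfrac12\ln2-\gamma$.

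\textbf{Step 1 (zeta series for $S$).} For $n\ge2$ we have
\[
\operatorname{arctanh}(1/n)-1/n=\sum_{m\ge1}\frac{n^{-(2m+1)}}{2m+1}.
\]
All terms are nonnegative, so Tonelli's theorem justifies interchanging the sums, exactly as in Proposition~\ref{prop:zeta-series}. This gives
\[
S=\sum_{m\ge1}\frac{\zeta(2m+1)-1}{2m+1},
\]
which is formally $C(1)$ in the notation of \eqref{eq:ck-def}. The interchange gives convergence for free, since $\zeta(2m+1)-1\sim2^{-2m-1}$.

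\textbf{Step 2 (known zeta series).} Start from $\ln\Gamma(1+x)=-\gamma x+\sum_{j\ge2}(-1)^j\zeta(j)x^j/j$ for $|x|<1$. Subtracting $-\ln(1+x)=\sum_{j\ge1}(-1)^jx^j/j$ and letting $x\to1^-$ (Abel's theorem; the resulting series converges absolutely) gives
\[
\sum_{j\ge2}(-1)^j\frac{\zeta(j)-1}{j}=\gamma+\ln2-1.
\]
Separately, the telescoping identity $\sum_{j\ge2}(\zeta(j)-1)/j$ requires care. Directly, $\sum_{j\ge2}n^{-j}/j=-\ln(1-1/n)-1/n$, and summing over $n\ge2$ gives
\[
\sum_{n\ge2}\Bigl(\ln\tfrac{n}{n-1}-\tfrac1n\Bigr)=\lim_{N}(\ln N-H_N+1)=1-\gamma.
\]
Again Tonelli applies because all terms are positive.

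\textbf{Step 3 (combine).} Adding the two identities isolates the even-index terms:
\[
2\sum_{m\ge1}\frac{\zeta(2m)-1}{2m}=\ln2.
\]
Subtracting them isolates the odd-index terms:
\[
2\sum_{m\ge1}\frac{\zeta(2m+1)-1}{2m+1}=(1-\gamma)-(\gamma+\ln2-1)=2-2\gamma-\ln2.
\]
Hence $S=1-\gamma-\tfrac12\ln2$, which rearranges to \eqref{eq:gamma-rep}.

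The main obstacle is Step~2's first identity, namely the boundary passage $x\to1^-$ in the $\ln\Gamma$ expansion. If we prefer to avoid Abel's theorem, we can instead derive the odd part directly from the telescoping alternative. Combining the expansions of $-\ln(1-1/n)$ and $-\ln(1+1/n)$ shows that
\[
\sum_{n\ge2}\Bigl(\operatorname{arctanh}(1/n)-\tfrac1n\Bigr)=\lim_N\Bigl(\tfrac12\ln\tfrac{N(N+1)}{2}-(H_N-1)\Bigr),
\]
since $\sum_{n=2}^N\tfrac12\ln\frac{n+1}{n-1}=\tfrac12\ln\frac{N(N+1)}{2}$. That limit equals $1-\gamma-\tfrac12\ln2$ directly, which is the cleanest route, and we would present it as the proof.
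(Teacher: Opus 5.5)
Your proposal is correct, and it takes a different and more elementary route than the paper.

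\textbf{The paper's route.} The paper first obtains $\sum_{n\ge2}\bigl(\operatorname{arctanh}(1/n)-1/n\bigr)=C(1)$ by a detour:
it differentiates $C$ term by term (Lemma~\ref{lem:c-deriv}),
it integrates $C'(t)$ over $[1,k]$ via the substitution $u=m^{-t}$ (Lemma~\ref{lem:integral}),
and it uses $\zeta(k)=1+h(k)-C(k)$ to cancel every $k$-dependent term.
It then evaluates $C(1)=1-\gamma-\tfrac12\ln 2$ (Proposition~\ref{prop:c1}) through the integral $\int_0^\infty\frac{\sinh x-x}{x\,e^x(e^x-1)}\,dx$, Frullani's theorem, and an integral formula for $\gamma$.

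\textbf{Your Steps 1--3.} Your Step~1 gets the identity with $C(1)$ in one line by Tonelli, as the paper itself does later when proving Theorem~\ref{thm:accel-gamma}. Your Steps~2--3 replace the Frullani computation by the $\ln\Gamma$ expansion and the telescoping value $\sum_{j\ge2}(\zeta(j)-1)/j=1-\gamma$. The boundary passage you worry about is harmless: the series $(1-\gamma)x+\sum_{j\ge2}(-1)^j(\zeta(j)-1)x^j/j$ equals $\ln\Gamma(2+x)$ and has radius of convergence $2$, so $x=1$ is an interior point.

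\textbf{Your final route.} This is the cleanest argument. Because $\operatorname{arctanh}(1/n)=\tfrac12\ln\frac{n+1}{n-1}$ exactly, the partial sum over $2\le n\le N$ equals $\tfrac12\ln\frac{N(N+1)}{2}-(H_N-1)$. No expansion or interchange of sums is needed, despite your phrasing. Then $H_N-\ln N\to\gamma$ finishes the proof. It uses the same product $\prod(n+1)/(n-1)$ that the paper telescopes for $k=3$.

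\textbf{What each route buys.} The paper's route yields the zeta-series value of $C(1)$, which is needed for the accelerated formula. Your telescoping argument combined with Step~1 yields that value as well, so it makes Proposition~\ref{prop:c1} and Frullani's theorem unnecessary.

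Incidentally, the paper's intermediate display $\sum_{m\ge2}\bigl(1/m-\operatorname{arctanh}(1/m)\bigr)=C(1)$ has the sign reversed, since its left side is negative. Your argument avoids this.
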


\subsection{Related work}

Talla Waffo~\cite{tallawaffo} recently investigated the Mellin transforms of
$1/\!\operatorname{arctanh}(x)$ and related functions on $(0,1)$, establishing
connections to derivatives of the Dirichlet eta and beta functions at negative
integers.  While the continuous integral approach of~\cite{tallawaffo} and our
discrete summation approach are technically independent, both highlight the rich
analytic structure underlying inverse hyperbolic functions.

\subsection{Novelty and imported tools}

The product formulas for $f(k)$ and $g(k)$ (Theorem~\ref{thm:cantrell})
are due to Cantrell and rest on classical Weierstrass--Euler--Gauss theory
for~$\Gamma$; these are imported and provide the starting point.
The Baker and Nesterenko transcendence theorems are applied as black boxes.
The main contribution of this paper is Theorem~\ref{thm:main}, which
connects Cantrell's product identities to arctanh sums, yielding the
closed-form evaluation $h(k) = \frac{1}{2}\ln\bigl(g(2k)/g(k)^2\bigr)$.
Further new results include the Euler--Mascheroni representation and its
accelerated variant (Corollary~\ref{cor:gamma}, Theorem~\ref{thm:accel-gamma}).

\subsection{Structure of the paper}

Section~\ref{sec:products} recalls the infinite product formulas and establishes
preliminary results.  Section~\ref{sec:main-proof} contains the proof of
Theorem~\ref{thm:main} and the zeta-series representation
(Proposition~\ref{prop:zeta-series}).
Section~\ref{sec:companion} introduces the companion series and derives
related identities.
Section~\ref{sec:zeta} derives the zeta function identity
and studies the correction function $C(k)$.  Section~\ref{sec:gamma} establishes the
representation for~$\gamma$ using Frullani's theorem.
Section~\ref{sec:analytic} develops the analytic properties of $h(k)$, including
monotonicity, convexity, asymptotic behavior, and two-sided bounds.
Section~\ref{sec:accel} presents an exponentially convergent reformulation of the
$\gamma$ representation and establishes tail bounds for the direct computation
of~$h(k)$.  Section~\ref{sec:arithmetic} proves the transcendence of $h(3)$ and
discusses the arithmetic nature of $h(k)$ for other~$k$.
Section~\ref{sec:numerical} collects numerical examples.

\section{Infinite product formulas}\label{sec:products}

We begin by recalling the product formulas that form the foundation of our approach.

\begin{definition}
For integers $k \ge 2$, define
\begin{equation}\label{eq:fg-def}
  g(k) := \prod_{n=2}^{\infty}\!\left(1 - \frac{1}{n^k}\right)
  \quad\text{and}\quad
  f(k) := \prod_{n=1}^{\infty}\!\left(1 + \frac{1}{n^k}\right).
\end{equation}
\end{definition}

These products converge for $k \ge 2$, since taking logarithms yields series that
converge by comparison with $\sum n^{-k}$.

\begin{theorem}[Cantrell~\cite{cantrell}; cf.\ \cite{prudnikov,borwein,dlmf}]
\label{thm:cantrell}
For integers $k \ge 2$, the products $f(k)$ and $g(k)$ have the following
closed-form evaluations.

\begin{itemize}
\item[(a)] For odd~$k$:
\begin{align}
  f(k) &= \frac{1}{\prod_{j=1}^{k-1}\Gamma\bigl[(-1)^{j(1+1/k)}\bigr]},
  \label{eq:f-odd} \\[4pt]
  g(k) &= \frac{1}{k\prod_{j=1}^{k-1}
  \Gamma\bigl[(-1)^{1+j(1+1/k)}\bigr]}.
  \label{eq:g-odd}
\end{align}

\item[(b)] For even~$k$:
\begin{align}
  f(k) &= \frac{\prod_{j=1}^{k/2}
  \sin\!\bigl[\pi(-1)^{(2j-1)/k}\bigr]}{(\pi i)^{k/2}},
  \label{eq:f-even} \\[4pt]
  g(k) &= \frac{\prod_{j=1}^{(k/2)-1}
  \sin\!\bigl[\pi(-1)^{2j/k}\bigr]}{k(\pi i)^{(k/2)-1}}.
  \label{eq:g-even}
\end{align}
\end{itemize}
\end{theorem}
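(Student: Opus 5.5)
The approach is to factor the polynomials $1\pm x^k$ over their $k$-th roots of unity, which turns each product into a finite product of products of the form $\prod_n (1 - z/n)$. These are then evaluated with the Weierstrass product for $1/\Gamma$. Write $\omega_j = e^{i\pi j(1+1/k)} = (-1)^{j(1+1/k)}$ with the principal branch. For odd $k$ the numbers $-\omega_j$ ($j=0,\dots,k-1$) run through the $k$-th roots of $-1$, and the $\omega_j$ run through the $k$-th roots of unity. Hence
\[
1+\frac{1}{n^k}=\prod_{j=0}^{k-1}\Bigl(1+\frac{\omega_j}{n}\Bigr),
\qquad
1-\frac{1}{n^k}=\prod_{j=0}^{k-1}\Bigl(1-\frac{\omega_j}{n}\Bigr),
\]
up to relabelling the roots. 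The factor $j=0$ is $1\pm 1/n$. These are the only factors that produce non-convergent (telescoping) pieces, so they are set aside.

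Next, take the partial product over $n\le N$ and insert the convergence factors $e^{\mp \omega_j/n}$. This is allowed because $\sum_{j}\omega_j=0$, the sum of all $k$-th roots of unity vanishing for $k\ge2$, so the inserted exponentials cancel in aggregate. Then pass to the limit using
\[
\frac{1}{\Gamma(z)} = z e^{\gamma z}\prod_{n\ge1}\Bigl(1+\frac zn\Bigr)e^{-z/n}.
\]
For $f(k)$ this gives $\prod_{j=1}^{k-1}\bigl(\omega_j\Gamma(\omega_j)\bigr)^{-1}$ times the $j=0$ contribution. The $e^{\gamma\omega_j}$ factors cancel because the $\omega_j$ sum to zero. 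The $j=0$ factor combines with the $\omega_j$ prefactors through the identity $\prod_{j\ge1}\omega_j\Gamma(\omega_j)=\prod_j\Gamma(1+\omega_j)$. Thus $f(k)=1/\prod_{j=1}^{k-1}\Gamma(1+\omega_j)$, and the plan is to reconcile this with the displayed form \eqref{eq:f-odd} by reading Cantrell's notation $\Gamma[(-1)^{\dots}]$ with the shift built in. I would verify the convention numerically at $k=3$. For $g(k)$ the $n=1$ term is excluded, and the $j=0$ factor $\prod_{n=2}^N(1-1/n)=1/N$ must be balanced against the growth of the remaining factors. The limit $\lim_{z\to1}\prod_{j}(1-z^k)/(1-z)=k$, equivalently $\prod_{j=1}^{k-1}(1-\omega_j)=k$, supplies the prefactor $1/k$ in \eqref{eq:g-odd}.

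For even $k$, pair the roots as $\pm\alpha$, with $\alpha$ ranging over $k/2$ roots. Then each factor $1-\alpha^2/n^2$ with $\alpha$ nonreal is evaluated by the sine product $\prod_n(1-z^2/n^2)=\sin(\pi z)/(\pi z)$. For $f(k)$ the roots of $x^k=-1$ pair into $\alpha_j=(-1)^{(2j-1)/k}$, $j=1,\dots,k/2$, with $1+x^k=\prod_j(1-x^2/\alpha_j^{2}\cdot\alpha_j^2\cdots)$. Concretely, I would write $1+n^{-k}=\prod_j(1-\beta_j^2/n^2)$, where the $\beta_j$ satisfy $\beta_j^2$ ranging over the $(k/2)$-th roots of $-1$ times $-1$. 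This yields $\prod_j \sin(\pi\beta_j)/(\pi\beta_j)$, and $\prod_j\beta_j$ simplifies to a power of $i$, giving $(\pi i)^{k/2}$. For $g(k)$ the pair $\pm1$ is removed and handled with $\lim_{z\to1}\sin(\pi z)/(\pi(1-z^2))$ together with the product $\prod(1-1/n^2)$ from $n=2$. The remaining $k/2-1$ pairs give the sines in \eqref{eq:g-even}, and again the factor $k$ emerges from $\prod(1-\text{roots})$.

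The main obstacle is bookkeeping rather than analysis. Justifying the interchange of finite and infinite products is routine, since each factor converges absolutely after the convergence factors are inserted. The delicate part is matching branch conventions for $(-1)^{j(1+1/k)}$ and the overall constants (powers of $i$, the factor $k$, the $j=0$ telescoping) exactly to the stated forms. I would fix these by evaluating both sides at $k=2,3$, where $f(2)=\sinh\pi/\pi$ and $g(2)=1/2$ are known.
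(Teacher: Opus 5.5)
Your strategy (factor $1\pm x^k$ over roots of unity, then use the Weierstrass product for $1/\Gamma$ when $k$ is odd and the sine product when $k$ is even) is the standard one. It is consistent with the tools the paper names; the paper itself gives no proof beyond that citation. The gap is that you never establish the displayed constants, and for odd $k$ you propose to alter the statement instead of proving it.

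After reaching $f(k)=1/\prod_{j=1}^{k-1}\Gamma(1+\omega_j)$, you plan to read $\Gamma[(-1)^{j(1+1/k)}]$ ``with the shift built in''. That reinterpretation is neither legitimate nor needed: \eqref{eq:f-odd} holds literally with $(-1)^x=e^{i\pi x}$. The missing fact is that for odd $k$ the product of all $k$-th roots of unity is $(-1)^{k-1}=1$, and $\omega_0=1$. Hence $\prod_{j=1}^{k-1}\omega_j=1$, and so $\prod_{j=1}^{k-1}\Gamma(1+\omega_j)=\prod_{j=1}^{k-1}\omega_j\Gamma(\omega_j)=\prod_{j=1}^{k-1}\Gamma(\omega_j)$. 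You already had $\prod_{j\ge1}\bigl(\omega_j\Gamma(\omega_j)\bigr)^{-1}$, since the $j=0$ contribution is $1/\Gamma(2)=1$ once the $e^{\gamma\omega_j}$ cancel. You only needed to observe that the prefactor equals $1$.

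Likewise your $g$-computation yields $1/\bigl(k\prod_{j=1}^{k-1}\Gamma(1-\omega_j)\bigr)$. The identity $\prod_{j=1}^{k-1}(-\omega_j)=(-1)^{k-1}=1$ converts $\Gamma(1-\omega_j)$ into $\Gamma(-\omega_j)=\Gamma[(-1)^{1+j(1+1/k)}]$, which is exactly \eqref{eq:g-odd}. You should also record why the $\omega_j$ hit each $k$-th root of unity exactly once: $\omega_j=e^{2\pi i jm/k}$ with $m=(k+1)/2$ and $\gcd(m,k)=1$.

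In the even case your description of the factorization is wrong. The $\beta_j^2$ must be the $(k/2)$-th roots of $-1$ themselves, not their negatives. With the sign flip the product equals $1-x^k$ whenever $k/2$ is odd, so it already fails at $k=2$. Since $\alpha_j^2=e^{2\pi i(2j-1)/k}$, $1\le j\le k/2$, are exactly the $(k/2)$-th roots of $-1$, one gets
\[
1+\frac{1}{n^k}=\prod_{j=1}^{k/2}\Bigl(1-\frac{\alpha_j^2}{n^2}\Bigr),\qquad
f(k)=\prod_{j=1}^{k/2}\frac{\sin\pi\alpha_j}{\pi\alpha_j},\qquad
\prod_{j=1}^{k/2}\alpha_j=e^{i\pi k/4}=i^{k/2}.
\]
For $g$, set $\beta_j=e^{2\pi ij/k}$. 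Then $1-n^{-k}=(1-n^{-2})\prod_{j=1}^{k/2-1}(1-\beta_j^2/n^2)$, and $\prod_{n\ge2}(1-z^2/n^2)=\sin(\pi z)/\bigl(\pi z(1-z^2)\bigr)$. The constant is $\frac12\bigl(\prod_{j=1}^{k/2-1}(1-\beta_j^2)\bigr)^{-1}=\frac12\cdot\frac2k$, because the $\beta_j^2$ are the nontrivial $(k/2)$-th roots of unity. Finally, $\prod_{j=1}^{k/2-1}\beta_j=e^{i\pi(k/2-1)/2}=i^{k/2-1}$.

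These identities, valid for every $k$, are the real content of the theorem once the factorization is in place. Numerical checks at $k=2,3$ cannot replace them.
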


The proof relies on the Weierstrass product representation
$1/\Gamma(z) = z e^{\gamma z}\prod_{n=1}^{\infty}(1+z/n)e^{-z/n}$,
the Euler reflection formula $\Gamma(z)\Gamma(1-z) = \pi/\sin(\pi z)$,
and the Gauss multiplication formula for~$\Gamma$.
These product evaluations were communicated by Cantrell to
MathWorld~\cite{cantrell} and follow from the general principle that
convergent infinite products whose factors are rational functions of the index
can always be expressed in terms of finite products of gamma values
(Borwein, Bailey, and Girgensohn~\cite{borwein};
Prudnikov, Brychkov, and Marichev~\cite{prudnikov}).

\begin{example}\label{ex:k2}
For $k=2$, we have $f(2) = \sinh(\pi)/\pi$ and $g(2) = 1/2$, giving
\[
  h(2) = \frac{1}{2}\ln\!\left(\frac{\sinh(\pi)}{\pi}\right)
  \approx 0.650923.
\]
The decimal expansion of $\ln(\sinh(\pi)/\pi) = 2h(2)$ appears in the OEIS
as~\seqnum{A393668}.
\end{example}

The following lemmas express the logarithms of $f(k)$ and $g(k)$ as zeta
series; they are used in the proof of Theorem~\ref{thm:companion} and other
consequences.

\begin{lemma}\label{lem:lnf}
For $k \ge 2$,
\begin{equation}\label{eq:lnf}
  \ln f(k) = \sum_{n=1}^{\infty}\frac{(-1)^{n+1}\zeta(kn)}{n}.
\end{equation}
\end{lemma}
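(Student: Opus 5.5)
We take logarithms of the defining product \eqref{eq:fk} and expand each factor with the Mercator series. Since the product converges absolutely for $k\ge 2$ (as noted after \eqref{eq:fg-def}), we have $\ln f(k)=\sum_{m=1}^{\infty}\ln\bigl(1+m^{-k}\bigr)$, where each summand is a real logarithm of a number in $(1,2]$. For every $m\ge 1$ we have $0<m^{-k}\le 1$, so
\[
  \ln\bigl(1+m^{-k}\bigr)=\sum_{n=1}^{\infty}\frac{(-1)^{n+1}}{n\,m^{kn}}.
\]
At $m=1$ this is the alternating series for $\ln 2$, which converges only conditionally.

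The key step is interchanging the two sums to obtain
\[
  \sum_{n=1}^{\infty}\frac{(-1)^{n+1}}{n}\sum_{m=1}^{\infty}m^{-kn}
  =\sum_{n=1}^{\infty}\frac{(-1)^{n+1}\zeta(kn)}{n}.
\]
Fubini--Tonelli cannot be applied directly to the full double series, because the $m=1$ row has absolute sum $\sum_n 1/n=\infty$. We therefore split off $m=1$.

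For $m\ge 2$, the double series of absolute values is
\[
  \sum_{m\ge 2}\sum_{n\ge 1}\frac{1}{n\,m^{kn}}
  =\sum_{m\ge 2}-\ln\bigl(1-m^{-k}\bigr)=-\ln g(k)<\infty .
\]
This is finite because $m^{-k}\le 1/4$ gives $-\ln(1-m^{-k})\le \tfrac43 m^{-k}$, which is summable for $k\ge 2$. Fubini--Tonelli then gives
\[
  \sum_{m\ge 2}\ln\bigl(1+m^{-k}\bigr)=\sum_{n\ge 1}\frac{(-1)^{n+1}\bigl(\zeta(kn)-1\bigr)}{n}.
\]

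The $m=1$ term contributes $\ln 2=\sum_{n\ge1}(-1)^{n+1}/n$, a convergent series. Adding two convergent series termwise is legitimate, so
\[
  \ln f(k)=\sum_{n\ge1}\frac{(-1)^{n+1}\bigl(\zeta(kn)-1\bigr)+(-1)^{n+1}}{n}
  =\sum_{n\ge1}\frac{(-1)^{n+1}\zeta(kn)}{n}.
\]
This is the claimed identity. The resulting series converges only conditionally, since $\zeta(kn)\to 1$ and the terms behave like $(-1)^{n+1}/n$; the statement should be read as a convergent series in the usual ordered sense.

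The main obstacle is precisely this $m=1$ term, where absolute convergence fails. Handling it separately, as above, resolves the problem. The rest is routine bookkeeping: $\zeta(kn)-1=\sum_{m\ge2}m^{-kn}$, and the bound $-\ln(1-x)\le x/(1-x)$ is used for the absolute-convergence check.
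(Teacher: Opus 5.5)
Your proof is correct and follows the same route as the paper. Both take logarithms of the product, expand each factor $\ln(1+m^{-k})$ by the Mercator series, and interchange the two summations.

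The difference is in how the interchange is justified, and your version is the sound one. The paper asserts that ``since $k\ge 2$, absolute convergence holds'' for the full double series and invokes Fubini--Tonelli. But the factor $1+1^{-k}=2$ contributes the row $\sum_{n\ge1}(-1)^{n+1}/n$, and its absolute sum diverges. No appeal to absolute convergence could work here anyway: the target series $\sum_{n}(-1)^{n+1}\zeta(kn)/n$ has terms of modulus at least $1/n$, so it is only conditionally convergent, as you note.

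Your repair is exactly what the argument needs. You isolate the $m=1$ row and apply Fubini--Tonelli only to the rows $m\ge 2$, where the absolute double sum is $-\ln g(k)<\infty$. You then add the convergent series $\ln 2=\sum_{n\ge1}(-1)^{n+1}/n$ termwise. The paper itself uses this same decomposition, in the reverse direction, when proving \eqref{eq:Bk-closed} in Theorem~\ref{thm:companion}.
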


\begin{proof}
Taking logarithms in the definition of $f(k)$ and applying the Taylor series
$\ln(1+x) = \sum_{m=1}^{\infty}(-1)^{m+1}x^m/m$ with $x = n^{-k}$ yields a
double series.  Since $k \ge 2$, absolute convergence holds, and the
Fubini--Tonelli theorem permits interchange of summation:
\[
  \ln f(k) = \sum_{n=1}^{\infty}\sum_{m=1}^{\infty}
  \frac{(-1)^{m+1}}{m\,n^{km}}
  = \sum_{m=1}^{\infty}\frac{(-1)^{m+1}}{m}\sum_{n=1}^{\infty}\frac{1}{n^{km}}
  = \sum_{m=1}^{\infty}\frac{(-1)^{m+1}\zeta(km)}{m}. \qedhere
\]
\end{proof}

\begin{lemma}\label{lem:lng}
For $k \ge 2$,
\begin{equation}\label{eq:lng}
  \ln g(k) = -\sum_{n=1}^{\infty}\frac{\zeta(kn)-1}{n}.
\end{equation}
\end{lemma}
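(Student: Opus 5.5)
The proof will follow Lemma~\ref{lem:lnf} essentially verbatim, with $\ln(1-x)$ in place of $\ln(1+x)$ and the product starting at $n=2$. Since $g(k)$ is a convergent product of positive factors, its logarithm is the convergent series
\[
  \ln g(k) = \sum_{n=2}^{\infty}\ln\!\left(1-\frac{1}{n^k}\right).
\]
For each $n\ge 2$ we have $0<n^{-k}\le 2^{-k}<1$, so the Taylor series $\ln(1-x) = -\sum_{m=1}^{\infty}x^m/m$ applies with $x=n^{-k}$. This gives
\[
  \ln g(k) = -\sum_{n=2}^{\infty}\sum_{m=1}^{\infty}\frac{1}{m\,n^{km}}.
\]

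All terms of this double series have the same sign, so Tonelli's theorem alone justifies swapping the order of summation. Finiteness of the common value can be checked directly. For fixed $m$,
\[
  \sum_{n\ge2} n^{-km} = \zeta(km)-1 \le 2^{-km} + \int_2^\infty t^{-km}\,dt \le C\,2^{-km},
\]
which is summable in $m$. After the interchange the expression becomes
\[
  -\sum_{m=1}^{\infty}\frac{1}{m}\sum_{n=2}^{\infty}\frac{1}{n^{km}}
  = -\sum_{m=1}^{\infty}\frac{\zeta(km)-1}{m},
\]
and renaming the index $m$ as $n$ gives \eqref{eq:lng}.

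The only point needing care is this interchange step, namely that the inner sum runs over $n\ge2$, so it produces $\zeta(km)-1$ rather than $\zeta(km)$. Starting at $n=2$ is also essential: the $n=1$ factor $1-1$ would vanish and $\ln(1-1)$ is undefined. Beyond this bookkeeping no real obstacle arises, since the nonnegativity of the terms makes Tonelli apply directly.
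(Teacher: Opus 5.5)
Your proposal is correct and matches the paper's proof: expand each $\ln(1-n^{-k})$ as $-\sum_{m\ge1} n^{-km}/m$, interchange the double sum by Tonelli (all terms share a sign, as in Lemma~\ref{lem:lnf}), and identify the inner sum $\sum_{n\ge2}n^{-km}$ as $\zeta(km)-1$. Your explicit finiteness check and your remark that the product must start at $n=2$ are sound additions that the paper leaves implicit.
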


\begin{proof}
The proof is analogous, using $\ln(1-x) = -\sum_{m=1}^{\infty}x^m/m$ and
noting that $\sum_{n=2}^{\infty}n^{-km} = \zeta(km) - 1$.
\end{proof}

\section{Proof of the main theorem}\label{sec:main-proof}

The argument is direct: express each arctanh term as a log-ratio, sum the
logarithms (justified by absolute convergence of the underlying product), and
identify the result with $f(k)/(2g(k))$.

\begin{proof}[Proof of Theorem~\ref{thm:main}]
For $n \ge 2$ and $k \ge 2$, we have $0 < n^{-k} < 1$, so
\[
  \operatorname{arctanh}(n^{-k})
  = \frac{1}{2}\ln\frac{1+n^{-k}}{1-n^{-k}}.
\]
Since $\sum_{n=2}^{\infty}|\ln(1\pm n^{-k})| < \infty$ for $k \ge 2$
(by comparison with $\sum n^{-k}$), we may interchange summation with the
logarithm:
\[
  \sum_{n=2}^{\infty}\operatorname{arctanh}(n^{-k})
  = \frac{1}{2}\sum_{n=2}^{\infty}\ln\frac{1+n^{-k}}{1-n^{-k}}
  = \frac{1}{2}\ln\prod_{n=2}^{\infty}\frac{1+n^{-k}}{1-n^{-k}}.
\]
From the definitions, $f(k) = (1+1)\prod_{n=2}^{\infty}(1+n^{-k})
= 2\prod_{n=2}^{\infty}(1+n^{-k})$ and
$g(k) = \prod_{n=2}^{\infty}(1-n^{-k})$, so
\[
  \prod_{n=2}^{\infty}\frac{1+n^{-k}}{1-n^{-k}}
  = \frac{f(k)/2}{g(k)} = \frac{f(k)}{2\,g(k)},
\]
which gives~\eqref{eq:main-f}.  Finally, using
$1-n^{-2k} = (1-n^{-k})(1+n^{-k})$,
\[
  g(2k) = \prod_{n=2}^{\infty}(1-n^{-2k})
  = \prod_{n=2}^{\infty}(1-n^{-k})(1+n^{-k})
  = g(k)\cdot\frac{f(k)}{2},
\]
so $f(k)/(2g(k)) = g(2k)/g(k)^2$, and~\eqref{eq:main} follows.
\end{proof}

\begin{proposition}[Zeta-series representation]\label{prop:zeta-series}
For all integers $k \ge 2$,
\begin{equation}\label{eq:rep3}
  h(k) = \sum_{m=0}^{\infty}\frac{\zeta\bigl((2m+1)k\bigr)-1}{2m+1}.
\end{equation}
The series converges absolutely with tail bound $O(2^{-3k})$.
\end{proposition}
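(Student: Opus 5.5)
We expand each summand by the Taylor series of $\operatorname{arctanh}$ and then swap the two summations, exactly as in Lemmas~\ref{lem:lnf} and~\ref{lem:lng}. For $0\le x<1$ we have $\operatorname{arctanh}(x)=\sum_{m=0}^{\infty}x^{2m+1}/(2m+1)$. Since $0<n^{-k}\le 2^{-k}<1$ for $n\ge 2$, substituting $x=n^{-k}$ gives
\[
  h(k)=\sum_{n=2}^{\infty}\sum_{m=0}^{\infty}\frac{1}{(2m+1)\,n^{(2m+1)k}}.
\]
All terms are nonnegative, so Tonelli's theorem lets us interchange the order of summation without any further hypothesis. Since $(2m+1)k\ge k\ge 2$, the inner sum over $n\ge 2$ equals $\zeta\bigl((2m+1)k\bigr)-1$, which yields~\eqref{eq:rep3}. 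Finiteness, and hence absolute convergence, follows because the left-hand side is finite: $\operatorname{arctanh}(x)\le x/(1-x^2)$ gives a bound of $\tfrac{4}{3}(\zeta(k)-1)$. Alternatively, we can use the tail estimate below.

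For the tail bound we use the elementary estimate $\zeta(s)-1=2^{-s}+\sum_{n\ge3}n^{-s}\le 2^{-s}+\int_2^\infty t^{-s}\,dt = 2^{-s}\bigl(1+\tfrac{2}{s-1}\bigr)\le 3\cdot 2^{-s}$ for $s\ge 2$. The $m=0$ term is $\zeta(k)-1$ itself. The remainder $\sum_{m\ge1}(\zeta((2m+1)k)-1)/(2m+1)$ is therefore bounded by $3\sum_{m\ge1}2^{-(2m+1)k}/(2m+1)\le 2^{-3k}\cdot\frac{1}{1-2^{-2k}}$, where the factor $3$ cancels against the $1/3$ coming from $m=1$ once we bound $1/(2m+1)\le 1/3$. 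This is $O(2^{-3k})$, as claimed. We read ``tail'' as the sum over $m\ge1$, which is precisely $C(k)$ in Corollary~\ref{cor:zeta}. The same computation shows that the tail after $M$ terms is $O(2^{-(2M+1)k})$.

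There is no real obstacle here. The only points needing care are (i) justifying the interchange, which is immediate from nonnegativity, and (ii) the constant in the bound $\zeta(s)-1\le C\,2^{-s}$, which the integral comparison settles uniformly for $s\ge2$.
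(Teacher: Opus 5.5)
Your proposal is correct and follows the paper's argument: expand $\operatorname{arctanh}(n^{-k})$ in its Taylor series and interchange the two sums by Tonelli/Fubini. You justify the interchange by nonnegativity plus finiteness of $h(k)$, while the paper uses the dominating series $\sum_{n,m}n^{-2(2m+1)}$. Your tail estimate via $\zeta(s)-1\le 2^{-s}\bigl(1+\tfrac{2}{s-1}\bigr)$ is the same integral comparison the paper uses in Theorem~\ref{thm:error} to bound $C(k)$. The paper's proof of the proposition itself never carries out that tail step, so your version is, if anything, more complete.
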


\begin{proof}
Expand $\operatorname{arctanh}(n^{-k})
= \sum_{m=0}^{\infty}n^{-k(2m+1)}/(2m+1)$ for each $n \ge 2$.
The double sum $\sum_{n \ge 2}\sum_{m \ge 0}n^{-k(2m+1)}/(2m+1)$ converges
absolutely for $k \ge 2$ (dominated by $\sum_{n,m}n^{-2(2m+1)}$),
so the Fubini--Tonelli theorem permits interchange:
\[
  h(k) = \sum_{m=0}^{\infty}\frac{1}{2m+1}\sum_{n=2}^{\infty}\frac{1}{n^{k(2m+1)}}
  = \sum_{m=0}^{\infty}\frac{\zeta\bigl((2m+1)k\bigr)-1}{2m+1}. \qedhere
\]
\end{proof}

\begin{remark}\label{rem:equiv}
Theorem~\ref{thm:main} and Proposition~\ref{prop:zeta-series} together give
three equivalent closed forms for $h(k)$:
\begin{align}
  h(k) &= \frac{1}{2}\ln\!\left(\frac{g(2k)}{g(k)^2}\right) \label{eq:rep2} \\
  &= \frac{1}{2}\ln\!\left(\frac{f(k)}{2\,g(k)}\right) \label{eq:rep2b} \\
  &= \sum_{m=0}^{\infty}\frac{\zeta\bigl((2m+1)k\bigr)-1}{2m+1}.
  \label{eq:rep3b}
\end{align}
\end{remark}

\section{The companion series}\label{sec:companion}

The zeta-series representation~\eqref{eq:rep3} sums over the odd-indexed
terms $\zeta((2m+1)k)$.  There are three natural companions: the all-index
sum, the even-index sum, and the alternating sum.  All admit closed forms in
terms of $g$.

\begin{theorem}[Companion series]\label{thm:companion}
For $k \ge 2$, define
\begin{align}
  A(k) &:= \sum_{n=1}^{\infty}\frac{\zeta(nk)-1}{n},
  \label{eq:Ak} \\
  E(k) &:= \sum_{n=1}^{\infty}\frac{\zeta(2nk)-1}{2n},
  \label{eq:Ek} \\
  B(k) &:= \sum_{n=1}^{\infty}\frac{(-1)^{n+1}\bigl[\zeta(nk)-1\bigr]}{n}.
  \label{eq:Bk}
\end{align}
Then
\begin{align}
  A(k) &= -\ln g(k), \label{eq:Ak-closed} \\
  E(k) &= -\tfrac{1}{2}\ln g(2k), \label{eq:Ek-closed} \\
  B(k) &= \ln\bigl(f(k)/2\bigr). \label{eq:Bk-closed}
\end{align}
Moreover, $h(k)$ and the three companions satisfy
\begin{equation}\label{eq:companion-ids}
  A = h + E,\quad B = h - E,\quad
  h = \tfrac{1}{2}(A+B),\quad E = \tfrac{1}{2}(A-B).
\end{equation}
\end{theorem}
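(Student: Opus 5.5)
The plan is to read off the three closed forms from Lemmas~\ref{lem:lnf} and~\ref{lem:lng}, and then obtain the linear relations~\eqref{eq:companion-ids} by splitting the absolutely convergent series $A(k)$ and $B(k)$ into odd- and even-indexed parts. Absolute convergence is available throughout because $0<\zeta(nk)-1\le \zeta(2n)-1 = O(4^{-n})$ for $k\ge 2$. So every series here may be rearranged, split or recombined term by term without further justification.

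\textbf{Closed forms.} Identity~\eqref{eq:Ak-closed} is exactly Lemma~\ref{lem:lng}. For~\eqref{eq:Ek-closed}, I will apply Lemma~\ref{lem:lng} with $k$ replaced by $2k$ (legitimate since $2k\ge 2$). This gives $-\ln g(2k)=\sum_{n\ge1}(\zeta(2nk)-1)/n = 2E(k)$. For~\eqref{eq:Bk-closed}, I will subtract $\ln 2=\sum_{n\ge1}(-1)^{n+1}/n$ from Lemma~\ref{lem:lnf}. Both series converge, the second only conditionally, but termwise subtraction of two convergent series is always valid. This yields $B(k)=\ln f(k)-\ln 2=\ln(f(k)/2)$. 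Alternatively, one can redo the Fubini argument of Lemma~\ref{lem:lnf} directly over $n\ge2$, which avoids conditional convergence altogether.

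\textbf{Relations.} Splitting $A(k)$ by parity of the index gives odd terms $\sum_{m\ge0}(\zeta((2m+1)k)-1)/(2m+1)=h(k)$, by Proposition~\ref{prop:zeta-series}, and even terms $E(k)$. Hence $A=h+E$. In $B(k)$ the sign $(-1)^{n+1}$ is $+1$ on odd $n$ and $-1$ on even $n$, so $B=h-E$. Adding and subtracting these two relations gives $h=\tfrac12(A+B)$ and $E=\tfrac12(A-B)$.

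As a consistency check, $h=\tfrac12(A+B)=\tfrac12\bigl(\ln(f(k)/2)-\ln g(k)\bigr)$ recovers~\eqref{eq:main-f}. Likewise $E=\tfrac12(A-B)$ combined with~\eqref{eq:Ek-closed} recovers the relation $g(2k)=g(k)f(k)/2$ from the proof of Theorem~\ref{thm:main}.

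The only step needing any care is the conditional convergence in the derivation of~\eqref{eq:Bk-closed}. I expect it to be a minor point, handled as described above.
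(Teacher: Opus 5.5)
Your proposal is correct and follows essentially the same route as the paper: Lemma~\ref{lem:lng} used directly and with $k\mapsto 2k$, subtraction of $\ln 2=\sum(-1)^{n+1}/n$ from Lemma~\ref{lem:lnf}, and an odd/even split of $A$ and $B$ via Proposition~\ref{prop:zeta-series}. Your remark on conditional convergence improves on the paper's rigor: because of the $n=1$ factor, the double series in the paper's proof of Lemma~\ref{lem:lnf} is not absolutely convergent, and running the Fubini argument over $n\ge 2$, as you suggest, avoids this.
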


\begin{proof}
Equation~\eqref{eq:Ak-closed} is Lemma~\ref{lem:lng}.
For~\eqref{eq:Ek-closed}, substitute $2k$ for $k$ in Lemma~\ref{lem:lng}:
$A(2k) = \sum_{n \ge 1}(\zeta(2nk)-1)/n = -\ln g(2k)$.
Since $E(k) = \frac{1}{2}A(2k)$, we get $E(k) = -\frac{1}{2}\ln g(2k)$.

For~\eqref{eq:Bk-closed}, write $\ln(f(k)/2) = \ln f(k) - \ln 2$.
From Lemma~\ref{lem:lnf},
\[
  \ln f(k) = \sum_{n=1}^{\infty}\frac{(-1)^{n+1}\zeta(kn)}{n}
  = \ln 2 + \sum_{n=1}^{\infty}\frac{(-1)^{n+1}(\zeta(kn)-1)}{n},
\]
where the $\ln 2$ arises from $\sum_{n=1}^{\infty}(-1)^{n+1}/n = \ln 2$.
Hence $B(k) = \ln f(k) - \ln 2$.

For~\eqref{eq:companion-ids}, separate $A$ into odd and even indices to get
$A = h + E$.  Since $B$ alternates signs, $B = h - E$.
The remaining identities follow; in particular, $h = A - E$ recovers
Theorem~\ref{thm:main}.
\end{proof}

\section{Connection to the Riemann zeta function}\label{sec:zeta}

\begin{proof}[Proof of Corollary~\ref{cor:zeta}]
Separating the $m = 0$ term in Proposition~\ref{prop:zeta-series}:
\[
  h(k) = \bigl[\zeta(k)-1\bigr]
  + \underbrace{\sum_{m=1}^{\infty}\frac{\zeta\bigl(k(2m+1)\bigr)-1}{2m+1}}_{C(k)},
\]
so $\zeta(k) = 1 + h(k) - C(k)$.
\end{proof}

\subsection{Properties of the correction function}

\begin{proposition}\label{prop:ck-properties}
The correction function $C(k)$ satisfies the following.
\begin{itemize}
\item[(a)] $C(k) > 0$ for all $k \ge 2$.
\item[(b)] $C(k) \to 0$ as $k \to \infty$.
\item[(c)] $C(k)$ is a strictly decreasing function of~$k$.
\end{itemize}
\end{proposition}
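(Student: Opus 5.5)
All three properties will follow from the double-sum representation of $C(k)$. Since $\zeta(s)-1=\sum_{n\ge 2}n^{-s}$, the definition~\eqref{eq:ck-def} and the same Fubini--Tonelli interchange used in Proposition~\ref{prop:zeta-series} give
\[
  C(k)=\sum_{m=1}^{\infty}\sum_{n=2}^{\infty}\frac{1}{(2m+1)\,n^{k(2m+1)}}
  =\sum_{n=2}^{\infty}\Bigl(\operatorname{arctanh}(n^{-k})-n^{-k}\Bigr).
\]
The strategy is to work termwise with this nonnegative double series.

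For (a), note that every term $\bigl((2m+1)n^{k(2m+1)}\bigr)^{-1}$ is strictly positive. In particular the term with $m=1$, $n=2$ contributes $\tfrac13 2^{-3k}>0$, so $C(k)>0$.

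For (c), fix $k\ge 2$ and compare $C(k)$ with $C(k+1)$ term by term. For each $m\ge1$ and $n\ge2$ we have $n^{-(k+1)(2m+1)}<n^{-k(2m+1)}$. Summing these strict inequalities over absolutely convergent series gives $C(k+1)<C(k)$. Equivalently, each summand $\zeta(k(2m+1))-1$ is strictly decreasing in $k$, because $\zeta(s)-1=\sum_{n\ge2}n^{-s}$ is strictly decreasing in $s$. The same argument works for real $k\ge2$ if a continuous-parameter statement is wanted.

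For (b), the main task is a uniform bound, and I would aim for an explicit one of size $O(2^{-3k})$, consistent with the tail bound stated in Proposition~\ref{prop:zeta-series}. For $s\ge 2$ there is the elementary estimate
\[
  \zeta(s)-1=2^{-s}+\sum_{n\ge3}n^{-s}\le 2^{-s}+\int_2^\infty x^{-s}\,dx=2^{-s}\Bigl(1+\tfrac{2}{s-1}\Bigr)\le 3\cdot 2^{-s}.
\]
Applying this with $s=k(2m+1)$ gives
\[
  C(k)\le 3\sum_{m\ge1}\frac{2^{-k(2m+1)}}{2m+1}\le 3\cdot 2^{-3k}\sum_{j\ge0}4^{-kj}\le 4\cdot 2^{-3k},
\]
which tends to $0$. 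Alternatively, one can use dominated convergence on the double series, since it is dominated by the $k=2$ case and each term tends to $0$.

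The only point requiring care is justifying the interchange and the integral comparison. Both are routine because all terms are positive, so I do not expect a genuine obstacle here.
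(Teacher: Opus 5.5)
Your proof is correct and follows the paper's argument. Parts (a) and (c) are exactly the paper's termwise positivity and the strict decrease of $\zeta(s)-1$ in $s$. For (b), your explicit bound $C(k)\le 4\cdot 2^{-3k}$, obtained by the integral comparison, is a quantitative replacement for the paper's dominated-convergence step; it is essentially the estimate the paper proves separately in Theorem~\ref{thm:error}.
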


\begin{proof}
Part~(a) holds because $\zeta(s) > 1$ for all $s > 1$.  Part~(b) follows from
$\zeta\bigl(k(2n+1)\bigr) \to 1$ exponentially fast as $k \to \infty$ and
dominated convergence.  For part~(c), if $k_1 < k_2$ then
$k_1(2n+1) < k_2(2n+1)$, so $\zeta\bigl(k_1(2n+1)\bigr)
> \zeta\bigl(k_2(2n+1)\bigr)$, whence $C(k_1) > C(k_2)$.
\end{proof}

\begin{theorem}\label{thm:error}
For integers $k \ge 2$, the error in the approximation
$\zeta(k) \approx 1 + h(k)$ satisfies
\begin{equation}\label{eq:error-bound}
  \bigl|\zeta(k) - 1 - h(k)\bigr| = C(k)
  \le \frac{1}{3}\!\left(1 + \frac{2}{3k-1}\right)
  \frac{2^{-3k}}{1 - 2^{-2k}}.
\end{equation}
In particular, $C(k) = O(2^{-3k})$ as $k \to \infty$.
\end{theorem}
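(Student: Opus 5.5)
The plan is to bound each term of $C(k)$ separately using an elementary integral-comparison estimate for $\zeta(s)-1$, and then sum the resulting geometric series.

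First, the equality $|\zeta(k)-1-h(k)| = C(k)$ is immediate. Corollary~\ref{cor:zeta} gives $\zeta(k)-1-h(k) = -C(k)$, and Proposition~\ref{prop:ck-properties}(a) gives $C(k)>0$.

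Next comes the key estimate: for real $s>1$,
\[
  \zeta(s)-1 = 2^{-s} + \sum_{n=3}^{\infty} n^{-s}
  \le 2^{-s} + \int_{2}^{\infty} x^{-s}\,dx
  = 2^{-s}\left(1 + \frac{2}{s-1}\right).
\]
This holds because $x^{-s}$ is decreasing, so $n^{-s}\le\int_{n-1}^{n}x^{-s}\,dx$ for each $n\ge 3$. I will apply it with $s = k(2m+1)$ for $m\ge 1$. Since $s\ge 3k$, the factor $1+2/(s-1)$ is at most $1+2/(3k-1)$, uniformly in $m$. The weight $1/(2m+1)$ is at most $1/3$ for $m\ge1$.

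Combining these two observations,
\[
  C(k) \le \frac13\left(1+\frac{2}{3k-1}\right)\sum_{m=1}^{\infty} 2^{-k(2m+1)}
  = \frac13\left(1+\frac{2}{3k-1}\right)\frac{2^{-3k}}{1-2^{-2k}},
\]
where the last step sums the geometric series with first term $2^{-3k}$ and ratio $2^{-2k}$. The $O(2^{-3k})$ statement then follows, since the prefactor is bounded and $1-2^{-2k}\ge 15/16$ for $k\ge2$.

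I do not expect a real obstacle. The only point needing care is the integral comparison: it must start at $x=2$, so that the explicit $2^{-s}$ term is kept separately. This is what produces exactly the factor $1+2/(s-1)$ appearing in the stated bound.
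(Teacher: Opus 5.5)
Your proposal is correct and matches the paper's proof essentially step for step: the same integral-comparison bound $\zeta(s)-1 \le 2^{-s}\bigl(1+2/(s-1)\bigr)$, applied with $s = k(2m+1) \ge 3k$ and weight $1/(2m+1)\le 1/3$, followed by summing the geometric series. Your explicit justification of the equality $|\zeta(k)-1-h(k)| = C(k)$ via Corollary~\ref{cor:zeta} and the positivity of $C(k)$ is a small addition that the paper leaves implicit.
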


\begin{proof}
For $s > 1$, we have $\zeta(s) - 1 = \sum_{m=2}^{\infty}m^{-s}
\le 2^{-s} + \int_2^{\infty}x^{-s}\,dx = 2^{-s}(1 + 2/(s-1))$.
Writing $s_n = k(2n+1)$, we have $s_n \ge 3k$ and $2n+1 \ge 3$ for $n \ge 1$,
so
\[
  C(k) \le \frac{1}{3}\!\left(1 + \frac{2}{3k-1}\right)
  \sum_{n=1}^{\infty}2^{-k(2n+1)}
  = \frac{1}{3}\!\left(1 + \frac{2}{3k-1}\right)
  \frac{2^{-3k}}{1 - 2^{-2k}}. \qedhere
\]
\end{proof}

\begin{remark}
The identity $\zeta(k) = 1 + h(k) - C(k)$ is of structural rather than
computational interest: it decomposes $\zeta(k)$ into a dominant
$\operatorname{arctanh}$-sum plus a correction that decays as $O(2^{-3k})$.
For $k = 2$, the approximation $\zeta(2) \approx 1 + h(2) \approx 1.65092$
already achieves a relative error less than $0.001\%$ from the
exact value $\pi^2/6 \approx 1.64493$.
\end{remark}

\section{Representation for the Euler--Mascheroni constant}
\label{sec:gamma}

We now derive a new series representation for the Euler--Mascheroni
constant using properties of the correction function and Frullani's integral
theorem.

\subsection{Derivative of the correction function}

\begin{lemma}\label{lem:c-deriv}
The derivative of $C(k)$ is given by
\begin{equation}\label{eq:c-deriv}
  C'(k) = -\sum_{m=2}^{\infty}\frac{\ln m}{m^{3k} - m^k}.
\end{equation}
\end{lemma}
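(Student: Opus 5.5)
The plan is to rewrite $C(k)$ as a double series over integers, sum the inner geometric-type structure in closed form, and then differentiate term by term in~$k$, now regarded as a real variable $k>1$. Expanding $\zeta(k(2n+1))-1=\sum_{m\ge2}m^{-k(2n+1)}$ and swapping the order of summation by Fubini--Tonelli (all terms are positive) gives
\[
  C(k)=\sum_{m=2}^{\infty}\sum_{n=1}^{\infty}\frac{m^{-k(2n+1)}}{2n+1}
  =\sum_{m=2}^{\infty}\Bigl(\operatorname{arctanh}(m^{-k})-m^{-k}\Bigr).
\]
This is simply Corollary~\ref{cor:zeta} applied termwise: $C(k)=h(k)-(\zeta(k)-1)$.

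Next I would differentiate each summand $\phi_m(k)=\operatorname{arctanh}(m^{-k})-m^{-k}$. With $x=m^{-k}$ we have $dx/dk=-x\ln m$, so
\[
  \phi_m'(k)=-\ln m\left(\frac{x}{1-x^2}-x\right)=-\ln m\,\frac{x^3}{1-x^2}
  =-\frac{\ln m}{m^{3k}-m^{k}},
\]
where the last equality comes from multiplying the numerator and denominator by $m^{3k}$. Summing over $m$ then yields the claimed formula, provided the interchange of the derivative and the sum is justified.

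The main obstacle is that justification. I would use the standard theorem on termwise differentiation. The series $\sum\phi_m(k)$ converges at some point, and we need the derivative series to converge uniformly on each $[k_0,\infty)$ with $k_0>1$. For $k\ge k_0$ and $m\ge2$ we have $m^{3k}-m^k\ge m^{3k}(1-2^{-2k_0})$. Hence $|\phi_m'(k)|\le \ln m\, m^{-3k_0}/(1-2^{-2k_0})$, which is summable. The Weierstrass M-test then gives uniform convergence, so $C$ is differentiable and the formula holds for all real $k>1$, in particular for integers $k\ge2$.

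Alternatively, one could differentiate the original zeta series in $C(k)$ directly using $\zeta'(s)=-\sum m^{-s}\ln m$. That route needs the same domination and only reorganizes the sum, so the route through arctanh described above is cleaner.
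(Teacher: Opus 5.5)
Your proof is correct, but it reverses the order of operations used in the paper. The paper differentiates the defining $n$-series termwise, obtaining $C'(k)=\sum_{n\ge1}\zeta'\bigl(k(2n+1)\bigr)$. It then inserts $\zeta'(s)=-\sum_{m\ge2}(\ln m)\,m^{-s}$, swaps the two sums, and evaluates the geometric series $\sum_{n\ge1}m^{-k(2n+1)}=m^{-3k}/(1-m^{-2k})$. The justification there is a one-line appeal to absolute convergence, and the uniform convergence needed for termwise differentiation in $n$ is left implicit. You instead resum over $n$ first, identifying $C(k)=\sum_{m\ge2}\bigl(\operatorname{arctanh}(m^{-k})-m^{-k}\bigr)$, and then differentiate in $m$, with an explicit Weierstrass $M$-test bound.

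Your route gives a closed-form antiderivative $\phi_m$ for each summand. This makes Lemma~\ref{lem:integral} an immediate consequence of the fundamental theorem of calculus, since $\int_1^k \ln m/(m^{3t}-m^t)\,dt=\phi_m(1)-\phi_m(k)$. It also shows that the later integration argument for Corollary~\ref{cor:gamma} amounts to evaluating your identity at $k=1$. The paper's route stays inside the zeta-series framework.

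One small adjustment: the paper later integrates $C'$ over $[1,k]$, so it needs the formula down to $k=1$. Your dominating sequence $\ln m\,m^{-3k_0}/(1-2^{-2k_0})$ is summable for every $k_0>1/3$, so you may take $k_0\le 1$ and cover that case with no other change.
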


\begin{proof}
Differentiating the definition~\eqref{eq:ck-def} term-by-term and using
$\zeta'(s) = -\sum_{m=2}^{\infty}(\ln m)/m^s$, we obtain
$C'(k) = \sum_{n=1}^{\infty}\zeta'\bigl(k(2n+1)\bigr)$.
Exchanging the order of summation (justified by absolute convergence for
$k \ge 1$) and evaluating the resulting geometric series yields~\eqref{eq:c-deriv}.
\end{proof}

\subsection{Integration and application of Frullani's theorem}

\begin{lemma}\label{lem:integral}
For $m \ge 2$ and $k \ge 1$,
\[
  \int_1^k \frac{\ln m}{m^{3t} - m^t}\,dt
  = m^{-k} - \operatorname{arctanh}(m^{-k})
  - m^{-1} + \operatorname{arctanh}(m^{-1}).
\]
\end{lemma}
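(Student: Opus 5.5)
The plan is to exhibit an explicit antiderivative of the integrand in $t$ and then apply the fundamental theorem of calculus on $[1,k]$. The natural candidate, read off from the right-hand side, is
\[
  F(t) := m^{-t} - \operatorname{arctanh}(m^{-t}),
\]
since the claimed identity is exactly $F(k) - F(1)$. So the whole proof reduces to verifying that $F'(t) = \ln m/(m^{3t}-m^t)$ for all $t \ge 1$ (indeed for all $t>0$), with $m \ge 2$ fixed.

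To compute $F'(t)$, I will set $u = m^{-t}$, so that $du/dt = -(\ln m)\,u$ and $0<u<1$ on the relevant range. This makes $\operatorname{arctanh}(u)$ smooth there. By the chain rule,
\[
  F'(t) = \Bigl(1 - \frac{1}{1-u^2}\Bigr)\frac{du}{dt}
  = \frac{-u^2}{1-u^2}\cdot\bigl(-(\ln m)\,u\bigr)
  = \frac{(\ln m)\,u^3}{1-u^2}.
\]
Multiplying numerator and denominator by $m^{3t}$ gives $u^3/(1-u^2) = 1/(m^{3t} - m^{t})$, so $F'(t) = \ln m/(m^{3t}-m^t)$, as required.

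Finally, the integrand is continuous on $[1,k]$ because $m^{3t} > m^t$ for $t>0$ and $m\ge2$. The fundamental theorem of calculus therefore gives $\int_1^k = F(k)-F(1)$, which is the stated formula. There is no real obstacle here. The only points needing care are the sign bookkeeping in the chain rule and checking that $u=m^{-t}\in(0,1)$, so that $\operatorname{arctanh}$ is differentiable with derivative $1/(1-u^2)$. As a sanity check, I can also note that expanding $u^3/(1-u^2)=\sum_{j\ge1}u^{2j+1}$ and integrating termwise reproduces the series $\operatorname{arctanh}(u)-u=\sum_{j\ge1}u^{2j+1}/(2j+1)$. This is consistent with how $C'(k)$ arose in Lemma~\ref{lem:c-deriv}.
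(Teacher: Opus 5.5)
Your proof is correct. It rests on the same antiderivative as the paper's, $u-\operatorname{arctanh}(u)$ with $u=m^{-t}$, but you verify it rather than derive it.

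The paper works constructively. It writes $1/(m^{3t}-m^t)=\frac{1}{2}\bigl[1/(m^t-1)-1/(m^t+1)\bigr]/m^t$ and substitutes $u=m^{-t}$ in each piece. It then integrates to logarithms and only at the end recognizes the combination $\frac{1}{2}\ln\frac{1-u}{1+u}=-\operatorname{arctanh}(u)$.

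You instead read $F(t)=m^{-t}-\operatorname{arctanh}(m^{-t})$ off the right-hand side and check $F'(t)=(\ln m)\,u^3/(1-u^2)=\ln m/(m^{3t}-m^t)$ with one chain-rule computation. This avoids the partial fractions, the logarithm bookkeeping, and the sign from reversing the limits under the decreasing substitution. The cost is that it presupposes the answer, whereas the paper's route shows where the $\operatorname{arctanh}$ comes from.

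Your side conditions, $0<u<1$ and continuity of the integrand on $[1,k]$, are exactly what the fundamental theorem of calculus needs. Your termwise sanity check via $u^3/(1-u^2)=\sum_{j\ge1}u^{2j+1}$ is also consistent.
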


\begin{proof}
We use the partial fraction decomposition
$1/(m^{3t} - m^t) = \frac{1}{2}\bigl[1/(m^t - 1) - 1/(m^t + 1)\bigr]/m^t$
and the substitution $u = m^{-t}$ in each integral.  Computing the resulting
logarithms and expressing them in terms of $\operatorname{arctanh}$ gives the
stated formula.
\end{proof}

\begin{proposition}\label{prop:c1}
The correction function at $k = 1$ satisfies
\begin{equation}\label{eq:c1}
  C(1) = 1 - \gamma - \tfrac{1}{2}\ln 2.
\end{equation}
\end{proposition}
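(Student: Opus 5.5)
The plan is to integrate the derivative formula of Lemma~\ref{lem:c-deriv} from $1$ to $k$, evaluate the resulting integrals with Lemma~\ref{lem:integral}, and then let $k\to\infty$.

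\textbf{Step 1 (fundamental theorem of calculus).} For $k>1$,
\[
  C(k) - C(1) = \int_1^k C'(t)\,dt = -\sum_{m=2}^{\infty}\int_1^k \frac{\ln m}{m^{3t}-m^t}\,dt .
\]
The interchange of sum and integral is justified by Tonelli, since every integrand is positive. Substituting Lemma~\ref{lem:integral} gives
\[
  C(k) - C(1) = \sum_{m=2}^{\infty}\Bigl[\operatorname{arctanh}(m^{-k}) - m^{-k}\Bigr]
  - \sum_{m=2}^{\infty}\Bigl[\operatorname{arctanh}(m^{-1}) - m^{-1}\Bigr].
\]
Both series converge absolutely, because $\operatorname{arctanh}(x)-x = O(x^3)$. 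The first one equals $h(k)-(\zeta(k)-1)$, which is exactly $C(k)$ by Corollary~\ref{cor:zeta}.

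\textbf{Step 2 (the obstruction).} The $C(k)$ terms cancel, so Step~1 only yields the tautology
\[
  C(1)=\sum_{m\ge 2}\bigl(\operatorname{arctanh}(1/m)-1/m\bigr).
\]
This is just the definition of $C(1)$, written as $\sum_{n\ge 1}(\zeta(2n+1)-1)/(2n+1)$, with the order of summation exchanged. So the real content is evaluating this sum in closed form.

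\textbf{Step 3 (the evaluation).} This is the step where the work lies. Write
\[
  \operatorname{arctanh}(1/m) = \tfrac12\ln\frac{m+1}{m-1},
\]
and sum over $2\le m\le M$. The logarithms telescope:
\[
  \tfrac12\sum_{m=2}^M\bigl[\ln(m+1)-\ln(m-1)\bigr] = \tfrac12\ln\frac{M(M+1)}{2}.
\]
Hence
\[
  \sum_{m=2}^M\Bigl(\operatorname{arctanh}\tfrac1m - \tfrac1m\Bigr)
  = \tfrac12\ln M + \tfrac12\ln(M+1) - \tfrac12\ln 2 - (H_M - 1).
\]
As $M\to\infty$ we have $H_M-\ln M\to\gamma$ and $\tfrac12\ln(M+1)-\tfrac12\ln M\to 0$. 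The limit is therefore $1-\gamma-\tfrac12\ln 2$, which gives \eqref{eq:c1}.

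If one wants to follow the paper's announced route through Frullani's theorem instead, the plan would be as follows. Represent $\gamma$ and $\ln 2$ through integrals of the form $\int_0^\infty (e^{-x}-e^{-2x})/x\,dx=\ln 2$, and use $\zeta(s)-1=\frac1{\Gamma(s)}\int_0^\infty x^{s-1}/(e^x(e^x-1))\,dx$. The telescoping argument is the cleaner choice, and it is the one I would commit to. The only delicate point is justifying the rearrangement $C(1)=\sum_m(\operatorname{arctanh}(1/m)-1/m)$. This follows from Tonelli applied to the nonnegative double series $\sum_{m\ge2}\sum_{n\ge1} m^{-(2n+1)}/(2n+1)$.
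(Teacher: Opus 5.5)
Your proof is correct, and Step~3 takes a genuinely different route from the paper. The paper works on the integral side. Via $\zeta(s)-1=\frac{1}{\Gamma(s)}\int_0^\infty \frac{x^{s-1}}{e^x(e^x-1)}\,dx$ it writes $C(1)=\int_0^\infty \frac{\sinh x-x}{x e^x(e^x-1)}\,dx$. It then decomposes the integrand into a piece handled by the integral formula $\gamma=\int_0^\infty[1/(e^x-1)-1/(xe^x)]\,dx$, a Frullani piece, and $e^{-x}$. (As written, the paper's Frullani value is off by a factor of $2$: in fact $\int_0^\infty(e^{-2x}-e^{-x})/x\,dx=-\ln 2$, and the $\tfrac12$ comes from $\sinh x=(e^x-e^{-x})/2$.)

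You stay on the series side instead. Tonelli gives $C(1)=\sum_{m\ge2}(\operatorname{arctanh}(1/m)-1/m)$. The exact telescoping $\sum_{m=2}^M\operatorname{arctanh}(1/m)=\tfrac12\ln\frac{M(M+1)}{2}$ then reduces everything to the defining limit $H_M-\ln M\to\gamma$; this is the same telescoping of $\prod(n+1)/(n-1)$ that the paper uses for $k=3$. Your route is shorter and fully elementary. It needs neither the Mellin-type representation of $\zeta$, nor Frullani, nor the integral formula for $\gamma$. The paper's route does produce an integral representation of $C(1)$ and fits its Frullani narrative, but it uses more machinery.

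Your argument also shows that Corollary~\ref{cor:gamma} is essentially a restatement of the definition of $\gamma$. Your Steps~1--2 reproduce the paper's proof of that corollary, and you correctly diagnose that the $C'$-integration collapses to a tautology carrying no information about $C(1)$. Those steps can simply be deleted. Your sign there, $\operatorname{arctanh}(1/m)-1/m$, is the correct one; the paper's corresponding display has it reversed.
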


\begin{proof}
From the definition, $C(1) = \sum_{n=1}^{\infty}[\zeta(2n+1)-1]/(2n+1)$.
Using the integral representation $[\zeta(2n+1)-1]/(2n+1)
= \int_0^{\infty}x^{2n}/[(2n+1)!\,e^x(e^x-1)]\,dx$, we write
\[
  C(1) = \int_0^{\infty}\frac{\sinh(x) - x}{x\,e^x(e^x-1)}\,dx.
\]
Decomposing the integrand and applying Frullani's theorem to the integral
$\int_0^{\infty}(e^{-2x} - e^{-x})/x\,dx = -\frac{1}{2}\ln 2$
(with $f(x) = e^{-x}$, $a = 2$, $b = 1$),
and using the standard representation
$\gamma = \int_0^{\infty}[1/(e^x-1) - 1/(xe^x)]\,dx$,
we obtain $C(1) = 1 - \gamma - \frac{1}{2}\ln 2$.
\end{proof}

\subsection{Proof of the gamma formula}

\begin{proof}[Proof of Corollary~\ref{cor:gamma}]
Integrating $C'(k)$ from~$1$ to~$k$ using Lemmas~\ref{lem:c-deriv}
and~\ref{lem:integral}:
\begin{align*}
  C(k) - C(1)
  &= -\sum_{m=2}^{\infty}\int_1^k \frac{\ln m}{m^{3t} - m^t}\,dt \\
  &= \sum_{m=2}^{\infty}\bigl[\operatorname{arctanh}(m^{-k}) - m^{-k}
  - \operatorname{arctanh}(m^{-1}) + m^{-1}\bigr].
\end{align*}
Rearranging and using the definitions of $h(k)$ and $\zeta(k)$:
\[
  C(k) = h(k) - \zeta(k) + 1
  - \sum_{m=2}^{\infty}\bigl[\operatorname{arctanh}(m^{-1}) - m^{-1}\bigr]
  + C(1).
\]
Substituting the identity $\zeta(k) = 1 + h(k) - C(k)$ from
Corollary~\ref{cor:zeta}, all $k$-dependent terms cancel, leaving
\[
  \sum_{m=2}^{\infty}\!\left(\frac{1}{m}
  - \operatorname{arctanh}\!\left(\frac{1}{m}\right)\right) = C(1).
\]
By Proposition~\ref{prop:c1}, $C(1) = 1 - \gamma - \frac{1}{2}\ln 2$,
and solving for $\gamma$ yields~\eqref{eq:gamma-rep}.
\end{proof}

\begin{remark}\label{rem:slow}
The series in~\eqref{eq:gamma-rep} converges, but slowly: the general term
$\operatorname{arctanh}(1/n) - 1/n \sim 1/(3n^3)$ as $n \to \infty$, so the
tail after $N$ terms is $O(N^{-2})$.  In Section~\ref{sec:accel}, we show
that expanding each $\operatorname{arctanh}$ term and resumming yields an
equivalent formula involving $\zeta(2m+1)$ that converges geometrically,
dramatically improving the rate from polynomial to exponential.
\end{remark}

\section{Analytic properties of $h(k)$}\label{sec:analytic}

In this section, we study the behavior of $h(k)$ as a function of the
continuous variable $k > 1$.  We establish monotonicity, convexity, an
asymptotic expansion, and explicit two-sided bounds.

\subsection{Monotonicity and convexity}

\begin{theorem}\label{thm:mono-convex}
For $k > 1$, the function $h(k) = \sum_{n=2}^{\infty}\operatorname{arctanh}(n^{-k})$
is strictly decreasing and strictly convex.  More precisely,
\begin{align}
  h'(k) &= -\sum_{n=2}^{\infty}
  \frac{(\ln n)\,n^{-k}}{1-n^{-2k}} < 0,
  \label{eq:h-deriv} \\[4pt]
  h''(k) &= \sum_{n=2}^{\infty}
  \frac{(\ln n)^2\,n^{-k}(1+n^{-2k})}{(1-n^{-2k})^2} > 0.
  \label{eq:h-second}
\end{align}
\end{theorem}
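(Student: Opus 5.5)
The plan is to differentiate the defining series term by term and justify each interchange by locally uniform convergence of the differentiated series on compact subsets of $(1,\infty)$. For fixed $n \ge 2$, set $\phi_n(k) = \operatorname{arctanh}(n^{-k})$. Writing $u = n^{-k}$, we have $du/dk = -(\ln n)\,u$ and $\frac{d}{du}\operatorname{arctanh}(u) = 1/(1-u^2)$. Hence
\[
  \phi_n'(k) = -\frac{(\ln n)\,n^{-k}}{1-n^{-2k}}.
\]
Differentiating once more, the derivative of $u/(1-u^2)$ with respect to $u$ is $(1+u^2)/(1-u^2)^2$. The second factor of $-\ln n$ then gives
\[
  \phi_n''(k) = \frac{(\ln n)^2\,n^{-k}(1+n^{-2k})}{(1-n^{-2k})^2}.
\]
These are exactly the summands in \eqref{eq:h-deriv} and \eqref{eq:h-second}.

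To justify term-by-term differentiation, fix a compact interval $[a,b] \subset (1,\infty)$. For $k \in [a,b]$ and $n \ge 2$ we have $n^{-2k} \le 2^{-2a} < 1$. This gives the bounds
\[
  |\phi_n'(k)| \le \frac{(\ln n)\,n^{-a}}{1-2^{-2a}},
  \qquad
  |\phi_n''(k)| \le \frac{2(\ln n)^2\,n^{-a}}{(1-2^{-2a})^2}.
\]
Both majorants are summable in $n$ because $a > 1$, since $\sum (\ln n)^j n^{-a} < \infty$. The series $\sum \phi_n(k)$ itself converges at every $k > 1$, because $\operatorname{arctanh}(u) \le u/(1-u^2)$. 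By the Weierstrass M-test, $\sum\phi_n'$ and $\sum\phi_n''$ converge uniformly on $[a,b]$. The standard theorem on differentiating series of $C^1$ functions then gives $h \in C^2(1,\infty)$ with $h' = \sum \phi_n'$ and $h'' = \sum\phi_n''$.

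Strict signs are immediate. Every summand of $h'$ is strictly negative, since $\ln n > 0$ and $n^{-k} > 0$ for $n \ge 2$, so $h'(k) < 0$. Likewise every summand of $h''$ is strictly positive, so $h''(k) > 0$. A negative derivative on the interval $(1,\infty)$ gives strict decrease, and a positive second derivative gives strict convexity, both by the mean value theorem.

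The only point needing care is the uniform domination near $k = 1$. This is why I work on compact subintervals bounded away from $1$ rather than on all of $(1,\infty)$ at once; uniformity is not needed globally, because differentiability is a local property.
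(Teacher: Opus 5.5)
Your proposal is correct and follows the paper's proof essentially step for step: the same chain-rule computation of $\varphi_n'$ and $\varphi_n''$, then term-by-term differentiation justified by the Weierstrass $M$-test on compact subsets of $(1,\infty)$. Your explicit majorants $(\ln n)^j n^{-a}$ on $[a,b]$ and your pointwise-convergence check via $\operatorname{arctanh}(u)\le u/(1-u^2)$ make the justification somewhat more careful than the paper's $C\,n^{-k+\varepsilon}$ domination.
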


\begin{proof}
Set $\varphi_n(k) := \operatorname{arctanh}(n^{-k})$ for $n \ge 2$.
With $t = n^{-k}$ we have $dt/dk = -(\ln n)\,n^{-k}$ and
$d^2t/dk^2 = (\ln n)^2\,n^{-k}$, so the chain rule gives
\begin{align*}
  \varphi_n'(k) &= \frac{dt/dk}{1-t^2}
  = \frac{-(\ln n)\,n^{-k}}{1-n^{-2k}},\\[4pt]
  \varphi_n''(k) &= \frac{(d^2t/dk^2)(1-t^2) + 2t(dt/dk)^2}{(1-t^2)^2}
  = \frac{(\ln n)^2\,n^{-k}(1+n^{-2k})}{(1-n^{-2k})^2}.
\end{align*}
Every summand in~\eqref{eq:h-deriv} is strictly negative and every summand
in~\eqref{eq:h-second} is strictly positive; the series converge uniformly on
compact subsets of $(1,\infty)$ since $n^{-k}(\ln n)^j / (1-n^{-2k})^j$
is dominated by $C\,n^{-k+\varepsilon}$ for any $\varepsilon > 0$.
Term-by-term differentiation is therefore justified by the Weierstrass $M$-test.
\end{proof}

\subsection{Asymptotic expansion}\label{sec:asymp}

\begin{theorem}\label{thm:asymp}
As $k \to \infty$,
\begin{equation}\label{eq:asymp}
  h(k) = \bigl(\zeta(k)-1\bigr) + \frac{\zeta(3k)-1}{3}
  + \frac{\zeta(5k)-1}{5} + \cdots
  = \sum_{m=0}^{M}\frac{\zeta\bigl((2m+1)k\bigr)-1}{2m+1} + R_M(k),
\end{equation}
where the remainder satisfies $|R_M(k)| \le \frac{4}{3}\cdot 2^{-(2M+3)k}$
for all $k \ge 2$.  In particular,
\begin{equation}\label{eq:asymp-leading}
  h(k) = \bigl(\zeta(k)-1\bigr) + O\bigl(2^{-3k}\bigr)
  = 2^{-k} + 3^{-k} + 4^{-k} + O\bigl(5^{-k} + 2^{-3k}\bigr).
\end{equation}
\end{theorem}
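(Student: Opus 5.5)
Starting from Proposition~\ref{prop:zeta-series}, truncating the absolutely convergent series after $m=M$ gives
\[
  R_M(k) = \sum_{m=M+1}^{\infty}\frac{\zeta\bigl((2m+1)k\bigr)-1}{2m+1},
\]
which is a sum of positive terms. The plan is to bound each term by a geometric quantity in $2^{-k}$ and sum. The bound used in the proof of Theorem~\ref{thm:error} will serve: for $s>1$,
\[
  \zeta(s)-1 \le 2^{-s}\Bigl(1+\tfrac{2}{s-1}\Bigr).
\]
For $s=(2m+1)k$ with $m\ge M+1\ge 1$ and $k\ge 2$ we have $s\ge 6$. Hence $1+2/(s-1)\le 7/5$, and the prefactor $1/(2m+1)\le 1/3$.

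Summing the resulting geometric series in $m$ gives
\[
  R_M(k) \le \frac{7}{15}\sum_{m\ge M+1}2^{-(2m+1)k} = \frac{7}{15}\,\frac{2^{-(2M+3)k}}{1-2^{-2k}}.
\]
With $k\ge 2$ we have $1/(1-2^{-2k})\le 16/15$. This yields a constant of about $0.498$, well below $4/3$. So the claimed bound holds, including the case $M=0$, where the leading omitted term is $(\zeta(3k)-1)/3$.

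For \eqref{eq:asymp-leading}, the case $M=0$ gives $h(k)=\zeta(k)-1+O(2^{-3k})$. Next expand
\[
  \zeta(k)-1 = 2^{-k}+3^{-k}+4^{-k}+\sum_{n\ge 5}n^{-k},
\]
and bound the tail by $5^{-k}+\int_5^\infty x^{-k}\,dx = 5^{-k}\bigl(1+5/(k-1)\bigr) = O(5^{-k})$.

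The only delicate point is bookkeeping: making sure the constant from the $\zeta(s)-1$ estimate, the factor $1/(2m+1)$, and the geometric ratio are all taken uniformly for $k\ge 2$. Otherwise the argument is routine, and the stated constant $4/3$ leaves generous slack.
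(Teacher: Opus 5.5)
Your proof is correct and follows the same route as the paper: truncate the zeta-series of Proposition~\ref{prop:zeta-series}, bound each $\zeta((2m+1)k)-1$ by a constant multiple of $2^{-(2m+1)k}$, and sum the geometric series in $m$. You also supply the $O(5^{-k})$ tail estimate for $\zeta(k)-1$, which the paper leaves implicit.

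Your bookkeeping is sounder than the paper's. The paper justifies $\zeta(s)-1\le 2^{1-s}$ via $\zeta(s)-1\le 2^{-s}/(1-2^{-s})$, which is false for every $s>1$ because the right side is a proper sub-sum of the left. It then asserts $2/(1-2^{-2k})\le 4/3$, which is also false. Your integral-comparison bound, with the factor $1/(2m+1)\le 1/3$ kept, gives the valid constant $112/225<4/3$.
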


\begin{proof}
The zeta-series representation~\eqref{eq:rep3} gives
$h(k) = \sum_{m=0}^{\infty}\bigl[\zeta\bigl((2m+1)k\bigr)-1\bigr]/(2m+1)$.
The remainder after $M$ terms is bounded using
$\zeta(s)-1 \le 2^{-s}/(1-2^{-s}) \le 2^{1-s}$ for $s \ge 2$:
\[
  |R_M(k)| \le \sum_{m=M+1}^{\infty}\frac{2^{1-(2m+1)k}}{2m+1}
  < 2\sum_{m=M+1}^{\infty}2^{-(2m+1)k}
  = \frac{2\cdot 2^{-(2M+3)k}}{1-2^{-2k}}
  \le \frac{4}{3}\cdot 2^{-(2M+3)k},
\]
where the last inequality uses $1-2^{-2k} \ge 3/4$ for $k \ge 1$.
Setting $M=0$ gives $h(k) = (\zeta(k)-1) + O(2^{-3k})$.
\end{proof}

\subsection{Two-sided bounds}

\begin{theorem}\label{thm:bounds}
For $k \ge 2$,
\begin{equation}\label{eq:lower-bound}
  \zeta(k)-1 < h(k)
\end{equation}
and
\begin{equation}\label{eq:upper-bound}
  h(k) < \zeta(k)-1 + \frac{\zeta(3k)-1}{3(1-2^{-2k})}.
\end{equation}
More refined lower bounds are obtained by including additional terms from the
zeta-series representation~\eqref{eq:rep3}:
\begin{equation}\label{eq:refined-lower}
  h(k) > \zeta(k)-1 + \frac{\zeta(3k)-1}{3} + \frac{\zeta(5k)-1}{5}.
\end{equation}
\end{theorem}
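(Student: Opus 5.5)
All three bounds follow from the zeta-series representation of Proposition~\ref{prop:zeta-series},
\[
  h(k) = \sum_{m=0}^{\infty}\frac{\zeta\bigl((2m+1)k\bigr)-1}{2m+1},
\]
whose terms are all strictly positive because $\zeta(s)>1$ for $s>1$. The lower bound \eqref{eq:lower-bound} is then immediate: the $m=0$ term equals $\zeta(k)-1$, and every remaining term (for instance $m=1$) is positive, so the inequality is strict. The refined lower bound \eqref{eq:refined-lower} works the same way. One keeps the terms $m=0,1,2$ and discards the strictly positive tail $\sum_{m\ge 3}$, which is nonempty.

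For the upper bound \eqref{eq:upper-bound}, write $h(k)-(\zeta(k)-1)=C(k)$ as in Corollary~\ref{cor:zeta}. It then suffices to show
\[
  C(k) < \frac{\zeta(3k)-1}{3(1-2^{-2k})}.
\]
The natural approach is to compare each term $\zeta((2m+1)k)-1$ with $\zeta(3k)-1$ times a geometric factor. Concretely, the plan is to prove, for $s\ge 3k$ and $j\ge 1$,
\[
  \zeta(s+2jk)-1 \le 2^{-2jk}\bigl(\zeta(s)-1\bigr).
\]
This holds termwise, since $n^{-(s+2jk)} = n^{-2jk}\,n^{-s} \le 2^{-2jk}\,n^{-s}$ for every $n\ge 2$, with strict inequality once $n\ge 3$. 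Taking $s=3k$ gives $\zeta((2m+1)k)-1 \le 2^{-2(m-1)k}\bigl(\zeta(3k)-1\bigr)$ for $m\ge 1$. We also use $1/(2m+1)\le 1/3$ for $m \ge 1$, with strict inequality for $m\ge 2$. Summing the geometric series over $m\ge 1$ then gives
\[
  C(k) < \frac{\zeta(3k)-1}{3}\sum_{m\ge 1}2^{-2(m-1)k}
  = \frac{\zeta(3k)-1}{3(1-2^{-2k})}.
\]
This is exactly the required bound. Strictness comes from the $m=2$ term, since $1/5<1/3$ and $\zeta(5k)>1$.

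I expect no real obstacle here. The only point needing care is the termwise comparison $n^{-2jk}\le 2^{-2jk}$, which is why the geometric ratio involves $2^{-2k}$ rather than a $\zeta$-dependent quantity. Absolute convergence of all the series involved is already guaranteed by Proposition~\ref{prop:zeta-series}, so rearranging and comparing the sums term by term is legitimate.
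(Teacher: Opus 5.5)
Your proposal is correct and is essentially the paper's argument. The paper applies $\operatorname{arctanh}(x)>x$ and $\operatorname{arctanh}(x)<x+\frac{x^3}{3(1-x^2)}$ at $x=n^{-k}$ and then uses $1/(1-n^{-2k})\le 1/(1-2^{-2k})$; you apply the same estimates, $1/(2m+1)\le 1/3$ and $n^{-2(m-1)k}\le 2^{-2(m-1)k}$, to the same double series after first summing over $n$ into zeta values, so only the order of summation differs. The refined lower bound is handled identically, and your strictness checks via the $m=1$, $m\ge 3$ and $m=2$ terms are all valid.
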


\begin{proof}
For the lower bound~\eqref{eq:lower-bound}, we use
$\operatorname{arctanh}(x) > x$ for $0 < x < 1$, whence
$h(k) > \sum_{n=2}^{\infty}n^{-k} = \zeta(k)-1$.

For~\eqref{eq:refined-lower}, we simply note that all terms in the
zeta-series~\eqref{eq:rep3} are positive, so any partial sum is a lower bound.

For the upper bound~\eqref{eq:upper-bound}, the inequality
$\operatorname{arctanh}(x) = x + \frac{x^3}{3} + \frac{x^5}{5} + \cdots
< x + \frac{x^3}{3}\bigl(1 + x^2 + x^4 + \cdots\bigr)
= x + \frac{x^3}{3(1-x^2)}$
holds for $0 < x < 1$.  Setting $x = n^{-k}$ and summing over $n \ge 2$:
\[
  h(k) < \bigl(\zeta(k)-1\bigr) + \frac{1}{3}\sum_{n=2}^{\infty}
  \frac{n^{-3k}}{1-n^{-2k}}.
\]
Since $n \ge 2$ implies $n^{-2k} \le 2^{-2k}$, we have
$1/(1-n^{-2k}) \le 1/(1-2^{-2k})$, giving~\eqref{eq:upper-bound}.
\end{proof}

\begin{example}
For $k=2$: the lower bound gives $h(2) > \zeta(2)-1 = 0.6449\ldots$,
the refined lower bound gives $h(2) > 0.6449 + 0.0060 + 0.0001 = 0.6510\ldots$,
and the upper bound gives $h(2) < 0.6449 + 0.0064 = 0.6514\ldots$.
The exact value is $h(2) = 0.6509\ldots$, which lies comfortably within these bounds.
\end{example}

\subsection{Difference asymptotics}\label{sec:diff}

For integer sequences, the forward differences $\Delta h(k) := h(k) - h(k+1)$
carry structural information about the rate of decay.

\begin{theorem}\label{thm:diff}
For integers $k \ge 2$,
\begin{equation}\label{eq:diff-exact}
  \Delta h(k) = \sum_{n=2}^{\infty}n^{-k}\!\left(1 - \frac{1}{n}\right)
  + O\bigl(2^{-3k}\bigr)
  = \bigl(\zeta(k) - \zeta(k\!+\!1)\bigr) + O\bigl(2^{-3k}\bigr).
\end{equation}
More explicitly, $\Delta h(k)$ admits the asymptotic expansion
\begin{equation}\label{eq:diff-asymp}
  \Delta h(k) = 2^{-(k+1)}
  + \frac{2}{3}\cdot 3^{-k} + \frac{3}{4}\cdot 4^{-k} + O\bigl(5^{-k}\bigr),
\end{equation}
and for any fixed $N \ge 2$,
\begin{equation}\label{eq:diff-asymp2}
  \Delta h(k) = \sum_{n=2}^{N}n^{-k}\!\left(1 - \frac{1}{n}\right)
  + O\bigl((N\!+\!1)^{-k} + 2^{-3k}\bigr).
\end{equation}
In particular,
$\Delta h(k) \sim 2^{-(k+1)}$ as $k \to \infty$.
\end{theorem}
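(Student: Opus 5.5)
The plan is to work from the zeta-series representation of Proposition~\ref{prop:zeta-series}, or equivalently from Corollary~\ref{cor:zeta}, which gives $h(k) = \zeta(k) - 1 + C(k)$. Subtracting the same identity at $k+1$ gives
\[
  \Delta h(k) = \bigl(\zeta(k)-\zeta(k+1)\bigr) + \bigl(C(k)-C(k+1)\bigr).
\]
By Proposition~\ref{prop:ck-properties}(a),(c) we have $0 < C(k+1) < C(k)$, so $0 < C(k)-C(k+1) \le C(k)$. Theorem~\ref{thm:error} then bounds $C(k)$ by an explicit constant times $2^{-3k}$. This settles the second equality in~\eqref{eq:diff-exact}.

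For the first equality in~\eqref{eq:diff-exact}, I would write the difference termwise:
\[
  \zeta(k)-\zeta(k+1) = \sum_{n\ge2}\bigl(n^{-k}-n^{-k-1}\bigr) = \sum_{n\ge 2} n^{-k}\bigl(1-\tfrac1n\bigr).
\]
Both series converge absolutely for $k\ge2$, so the termwise subtraction is legitimate.

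For~\eqref{eq:diff-asymp2}, I split the sum at $N$. The tail is
\[
  \sum_{n>N} n^{-k}(1-1/n) \le \sum_{n\ge N+1} n^{-k} \le (N+1)^{-k} + \int_{N+1}^\infty x^{-k}\,dx .
\]
The integral equals $(N+1)^{1-k}/(k-1)$, which is $\le (N+1)^{-k}\cdot (N+1)$. Since $N$ is fixed, the tail is $O((N+1)^{-k})$ uniformly in $k\ge2$, and the $O(2^{-3k})$ from the $C$-difference is carried along unchanged. Taking $N=4$ and evaluating $n^{-k}(1-1/n)$ at $n=2,3,4$ gives $2^{-(k+1)}$, $\tfrac23 3^{-k}$ and $\tfrac34 4^{-k}$. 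The error is then $O(5^{-k}+2^{-3k})$, and since $2^{-3k}=8^{-k}\le 5^{-k}$, it is $O(5^{-k})$. This gives~\eqref{eq:diff-asymp}.

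Finally, $\Delta h(k)\sim 2^{-(k+1)}$ follows because $2^{k+1}\Delta h(k) = 1 + O\bigl((2/3)^{k}\bigr) \to 1$.

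The main point of care is uniformity. The big-$O$ constants must be independent of $k$, which is why the integral-comparison tail bound and the explicit bound of Theorem~\ref{thm:error} are used rather than limits. There is no genuine obstacle here; the only subtle check is the absorption $2^{-3k}\le5^{-k}$.
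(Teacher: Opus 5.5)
Your proposal is correct and follows essentially the same argument as the paper. The paper also separates the $m=0$ term $\zeta(k)-\zeta(k+1)$ from the zeta-series for $\Delta h(k)$ and bounds the $m\ge 1$ remainder, which is exactly your $C(k)-C(k+1)$, by $C(k)=O(2^{-3k})$ via Theorem~\ref{thm:error}. Your integral-comparison tail bound for \eqref{eq:diff-asymp2} and the absorption $8^{-k}\le 5^{-k}$ spell out details the paper leaves implicit.
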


\begin{proof}
From the zeta-series representation~\eqref{eq:rep3},
\[
  \Delta h(k) = \sum_{m=0}^{\infty}\frac{\zeta\bigl((2m+1)k\bigr)
  - \zeta\bigl((2m+1)(k+1)\bigr)}{2m+1}.
\]
The $m = 0$ term contributes $\zeta(k) - \zeta(k+1)
= \sum_{n=2}^{\infty}n^{-k}(1-n^{-1})$.  For $m \ge 1$, we bound
\[
  \sum_{m=1}^{\infty}\frac{\zeta\bigl((2m+1)k\bigr)
  - \zeta\bigl((2m+1)(k+1)\bigr)}{2m+1}
  \le \sum_{m=1}^{\infty}\frac{\zeta\bigl((2m+1)k\bigr)-1}{2m+1}
  = C(k) = O(2^{-3k}),
\]
using Theorem~\ref{thm:error}.
The leading terms of $\sum_{n=2}^{\infty}n^{-k}(1-1/n)$ are
$2^{-k}\cdot\frac{1}{2} + 3^{-k}\cdot\frac{2}{3}
+ 4^{-k}\cdot\frac{3}{4} + \cdots$, giving~\eqref{eq:diff-asymp}.
\end{proof}

\begin{remark}
The sequence of differences $\Delta h(2), \Delta h(3), \Delta h(4), \ldots$
begins $0.4482, 0.1203, 0.0455, 0.0196, 0.0090, \ldots$
and the ratio $\Delta h(k)/2^{-(k+1)}$ converges to~$1$ from above,
consistent with the positive correction from $n = 3, 4, \ldots$
in~\eqref{eq:diff-asymp}.  The decimal expansions of these values
may be of independent interest for OEIS submission.
\end{remark}

\section{Accelerated computation}\label{sec:accel}

\subsection{Exponentially convergent formula for $\gamma$}

The series~\eqref{eq:gamma-rep} for $\gamma$ converges only polynomially.
We now show that the same identity, combined with the zeta-series
representation, yields a formula with exponential convergence.

\begin{theorem}\label{thm:accel-gamma}
The Euler--Mascheroni constant satisfies
\begin{equation}\label{eq:gamma-accel}
  \gamma = 1 - \frac{1}{2}\ln 2
  - \sum_{m=1}^{\infty}\frac{\zeta(2m+1)-1}{2m+1},
\end{equation}
where the series converges geometrically.  The tail after $N$ terms satisfies
\begin{equation}\label{eq:gamma-tail}
  0 < \sum_{m=N+1}^{\infty}\frac{\zeta(2m+1)-1}{2m+1}
  < \frac{4}{3}\cdot 2^{-(2N+3)}.
\end{equation}
In particular, $N$ terms of~\eqref{eq:gamma-accel} determine $\gamma$ to
within $O(4^{-N})$, compared to $O(N^{-2})$ for the direct
series~\eqref{eq:gamma-rep}.
\end{theorem}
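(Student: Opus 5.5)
The identity~\eqref{eq:gamma-accel} is essentially Proposition~\ref{prop:c1} rewritten, so the plan is to invoke it and then concentrate on the tail estimate. By the definition~\eqref{eq:ck-def} at $k=1$, we have $C(1)=\sum_{m\ge1}(\zeta(2m+1)-1)/(2m+1)$, and Proposition~\ref{prop:c1} gives $C(1)=1-\gamma-\tfrac12\ln 2$. Solving for $\gamma$ yields~\eqref{eq:gamma-accel} directly.

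As a cross-check independent of the integral computation, one could instead start from Corollary~\ref{cor:gamma}. Expand $\operatorname{arctanh}(1/n)-1/n=\sum_{m\ge1}n^{-(2m+1)}/(2m+1)$ for $n\ge2$; all terms are nonnegative. Tonelli then allows swapping the sums over $n\ge2$ and $m\ge1$, which gives $\sum_{m\ge1}(\zeta(2m+1)-1)/(2m+1)$, exactly as in Proposition~\ref{prop:zeta-series} but with the $m=0$ term absent. This is the route I would present, since it makes the link to Remark~\ref{rem:slow} explicit.

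For the tail bound~\eqref{eq:gamma-tail}, positivity follows from $\zeta(s)>1$. For the upper bound I would mimic the proof of Theorem~\ref{thm:asymp} at $k=1$. Use $\zeta(s)-1\le 2^{-s}(1+2/(s-1))$ from the proof of Theorem~\ref{thm:error}. For $s=2m+1$ this gives $\zeta(s)-1\le 2^{-s}\cdot\tfrac{2m+2}{2m}$. The tail is then at most
\[
\sum_{m\ge N+1}\frac{m+1}{m(2m+1)}\,2^{-(2m+1)}.
\]
Since $(m+1)/(m(2m+1))\le 1/3$ for $m\ge1$, this is at most $\tfrac13\cdot 2^{-(2N+3)}/(1-\tfrac14)=\tfrac49\,2^{-(2N+3)}$, which is below $\tfrac43\,2^{-(2N+3)}$.

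The main obstacle is this bound. The cruder estimate $\zeta(s)-1\le 2^{1-s}$ used in Theorem~\ref{thm:asymp} is only valid for $s\ge2$. With it the constant is only $\tfrac83\cdot\tfrac13$, so I would use the integral-comparison bound instead. The constant must be checked carefully at $N=0$, i.e.\ $m=1$, $s=3$: there $\zeta(3)-1\approx0.202$ against $2^{-3}\cdot 2=0.25$, so the bound holds. Finally, the $O(4^{-N})$ claim follows since $2^{-(2N+3)}=\tfrac18 4^{-N}$, and the $O(N^{-2})$ comparison is Remark~\ref{rem:slow}.
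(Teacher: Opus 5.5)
Your derivation of the identity is the paper's own: start from Corollary~\ref{cor:gamma}, expand $\operatorname{arctanh}(1/n)-1/n$, and interchange the sums by Tonelli. Calling this a cross-check ``independent of the integral computation'' is not accurate, though. The paper's proof of Corollary~\ref{cor:gamma} ends by invoking Proposition~\ref{prop:c1}. So both of your routes rest on the same Frullani evaluation $C(1)=1-\gamma-\tfrac{1}{2}\ln 2$; the Tonelli step only shows that the two series coincide.

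The tail estimate has a concrete slip. The inequality $(m+1)/(m(2m+1))\le 1/3$ fails at $m=1$, where the left side is $2/3$. As a result, your intermediate bound $\tfrac{4}{9}\cdot 2^{-(2N+3)}$ is false for $N=0$: the tail is then $C(1)\approx 0.0762$, which exceeds $\tfrac{1}{18}\approx 0.0556$. Replacing $1/3$ by $2/3$ gives $\tfrac{8}{9}\cdot 2^{-(2N+3)}$, which is still strictly below $\tfrac{4}{3}\cdot 2^{-(2N+3)}$, so the theorem survives. For $N\ge 1$ your $1/3$ is valid as written.

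With this repair, your argument actually improves on the paper's. The paper discards the factor $1/(2m+1)$ and writes $2\sum_{m\ge N+1}2^{-(2m+1)}=2\cdot 2^{-(2N+3)}/(1-4^{-1})=\tfrac{4}{3}\cdot 2^{-(2N+3)}$. But the middle expression equals $\tfrac{8}{3}\cdot 2^{-(2N+3)}$, so the paper's computation does not reach the stated constant. Keeping $1/(2m+1)\le 1/3$ is exactly what closes that gap.

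Switching to the integral-comparison bound, by contrast, is not what matters. The estimate $\zeta(s)-1\le 2^{1-s}$ is false at $s=2$, contrary to the paper's wording, but it holds for all $s\ge 3$. That covers every $s=2m+1$ in the tail, and at $s=3$ it coincides with $2^{-s}(1+2/(s-1))$. So the crude bound together with $1/(2m+1)\le 1/3$ yields the same constant $\tfrac{8}{9}$.
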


\begin{proof}
From~\eqref{eq:gamma-rep},
\[
  \gamma = 1 - \tfrac{1}{2}\ln 2
  - \sum_{n=2}^{\infty}\bigl[\operatorname{arctanh}(n^{-1}) - n^{-1}\bigr].
\]
Expanding $\operatorname{arctanh}(n^{-1}) - n^{-1}
= \sum_{m=1}^{\infty}n^{-(2m+1)}/(2m+1)$ and exchanging the order of
summation (justified by absolute convergence) gives
\[
  \sum_{n=2}^{\infty}\bigl[\operatorname{arctanh}(n^{-1}) - n^{-1}\bigr]
  = \sum_{m=1}^{\infty}\frac{1}{2m+1}\sum_{n=2}^{\infty}\frac{1}{n^{2m+1}}
  = \sum_{m=1}^{\infty}\frac{\zeta(2m+1)-1}{2m+1},
\]
which is precisely $C(1)$ from Proposition~\ref{prop:c1}.
This establishes~\eqref{eq:gamma-accel}.

For the tail bound, $\zeta(s)-1 \le 2^{1-s}$ for $s \ge 2$
(as in the proof of Theorem~\ref{thm:asymp}), so
\[
  \sum_{m=N+1}^{\infty}\frac{\zeta(2m+1)-1}{2m+1}
  < 2\sum_{m=N+1}^{\infty}2^{-(2m+1)}
  = \frac{2\cdot 2^{-(2N+3)}}{1-4^{-1}}
  = \frac{4}{3}\cdot 2^{-(2N+3)}. \qedhere
\]
\end{proof}

\begin{remark}
The acceleration is dramatic.  Ten terms of~\eqref{eq:gamma-accel}
yield $\gamma$ to $7$ decimal places (error $< 1.6\times 10^{-7}$),
while ten terms of the original series~\eqref{eq:gamma-rep} give only
$1$ correct digit.  Twenty terms of~\eqref{eq:gamma-accel} achieve
$13$-digit accuracy.  A numerical comparison appears in
Section~\ref{sec:numerical}.
\end{remark}

\subsection{Tail bounds for direct computation of $h(k)$}

For computational purposes, we bound the error incurred by truncating the
defining series for~$h(k)$.

\begin{proposition}\label{prop:tail}
Let $h_N(k) := \sum_{n=2}^{N}\operatorname{arctanh}(n^{-k})$.  Then
for $k \ge 2$ and $N \ge 2$,
\begin{equation}\label{eq:tail-bound}
  0 < h(k) - h_N(k)
  \le \frac{1}{1-(N+1)^{-2k}}\sum_{n=N+1}^{\infty}n^{-k}
  \le \frac{1}{1-(N+1)^{-2k}}\!\left[(N\!+\!1)^{-k}
  + \frac{(N\!+\!1)^{1-k}}{k-1}\right].
\end{equation}
\end{proposition}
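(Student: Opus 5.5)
The tail $h(k)-h_N(k)=\sum_{n=N+1}^{\infty}\operatorname{arctanh}(n^{-k})$ is a sum of strictly positive terms, since $0<n^{-k}<1$. This gives the strict lower bound $0<h(k)-h_N(k)$ at once. For the first upper bound I will use a pointwise inequality for $\operatorname{arctanh}$. For $0<x<1$, the Taylor series from the proof of Proposition~\ref{prop:zeta-series} gives
\[
  \operatorname{arctanh}(x)=\sum_{m\ge 0}\frac{x^{2m+1}}{2m+1}\le \sum_{m\ge0}x^{2m+1}=\frac{x}{1-x^2}.
\]
Taking $x=n^{-k}$ with $n\ge N+1$, we have $n^{-2k}\le (N+1)^{-2k}$, so $1/(1-n^{-2k})\le 1/(1-(N+1)^{-2k})$. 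Summing over $n\ge N+1$ then gives the middle expression in~\eqref{eq:tail-bound}. This is the same device used for the upper bound~\eqref{eq:upper-bound} in Theorem~\ref{thm:bounds}, now applied to the tail only.

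For the final inequality I will bound $\sum_{n\ge N+1}n^{-k}$ by the standard integral comparison already used in the proof of Theorem~\ref{thm:error}. Keep the first term $(N+1)^{-k}$ separately. Since $x^{-k}$ is decreasing, $n^{-k}\le\int_{n-1}^{n}x^{-k}\,dx$ for $n\ge N+2$, and so
\[
  \sum_{n\ge N+2}n^{-k}\le\int_{N+1}^{\infty}x^{-k}\,dx=\frac{(N+1)^{1-k}}{k-1}.
\]
This requires $k>1$, which holds because $k\ge2$. Multiplying by the positive factor $1/(1-(N+1)^{-2k})$ preserves the inequality and yields the stated bound.

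No step is a genuine obstacle. The only point needing care is to apply the geometric-series majorant termwise and then to make the denominator uniform using monotonicity in $n$. Convergence of every series involved is guaranteed because $k\ge 2$.
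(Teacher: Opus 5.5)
Your proposal is correct and follows the paper's proof essentially verbatim: positivity of the tail terms, the bound $\operatorname{arctanh}(x)\le x/(1-x^2)$ (which you justify via the Taylor series, whereas the paper simply asserts it), uniformizing the denominator using $n^{-2k}\le (N+1)^{-2k}$, and the integral comparison $\sum_{n\ge N+1}n^{-k}\le (N+1)^{-k}+\int_{N+1}^{\infty}x^{-k}\,dx$. Nothing is missing.
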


\begin{proof}
Since $\operatorname{arctanh}(x) \le x/(1-x^2)$ for $0 < x < 1$ and
$n^{-2k} \le (N+1)^{-2k}$ for $n \ge N+1$,
\begin{align*}
  h(k) - h_N(k)
  &= \sum_{n=N+1}^{\infty}\operatorname{arctanh}(n^{-k})
  \le \sum_{n=N+1}^{\infty}\frac{n^{-k}}{1-n^{-2k}} \\
  &\le \frac{1}{1-(N+1)^{-2k}}\sum_{n=N+1}^{\infty}n^{-k}.
\end{align*}
The final inequality follows from the integral comparison
$\sum_{n=N+1}^{\infty}n^{-k} \le (N+1)^{-k}
+ \int_{N+1}^{\infty}x^{-k}\,dx$.
\end{proof}

\begin{example}
For $k=2$ and $N=100$: the tail bound gives
$h(2) - h_{100}(2) < 1.0002\cdot(101^{-2} + 101^{-1}) < 0.0100$.
For $N=10{,}000$: the bound gives $h(2) - h_{10000}(2) < 1.0001 \cdot 10^{-4}$.
In practice, the zeta-series representation~\eqref{eq:rep3} is preferable for
high-precision computation, as it converges geometrically.
\end{example}

\section{Arithmetic properties}\label{sec:arithmetic}

We now investigate the arithmetic nature of the values $h(k)$.  The main
result of this section is that $k = 3$ is the unique integer $k \ge 2$ for
which $f(k)/(2g(k))$ is rational, and that $h(3)$ is transcendental.

\subsection{Cyclotomic telescoping for \texorpdfstring{$k=3$}{k=3}}

\begin{theorem}\label{thm:h3-transcendence}
We have
\begin{equation}\label{eq:h3}
  h(3) = \frac{1}{2}\ln\frac{3}{2}.
\end{equation}
In particular, $h(3)$ is transcendental.
\end{theorem}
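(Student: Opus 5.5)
By Theorem~\ref{thm:main} (in the form \eqref{eq:main-f}, or directly from the first display of its proof), it suffices to show
\[
  \prod_{n=2}^{\infty}\frac{n^3+1}{n^3-1} = \frac{3}{2}.
\]
I will prove this by telescoping the partial products, using the cyclotomic factorizations
\[
n^3-1=(n-1)(n^2+n+1), \qquad n^3+1=(n+1)(n^2-n+1).
\]
The key observation is that the two quadratic factors are shifts of one another. Set $q(m):=m^2+m+1$. Then $n^2-n+1=q(n-1)$ and $n^2+n+1=q(n)$.

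For $N\ge 2$ I split the partial product into two telescoping pieces:
\[
  P_N:=\prod_{n=2}^{N}\frac{n^3+1}{n^3-1}
  =\prod_{n=2}^{N}\frac{n+1}{n-1}\cdot\prod_{n=2}^{N}\frac{q(n-1)}{q(n)}.
\]
The first factor collapses to $\frac{N(N+1)}{1\cdot 2}$. The second collapses to $\frac{q(1)}{q(N)}=\frac{3}{N^2+N+1}$. Hence
\[
P_N=\frac{3N(N+1)}{2(N^2+N+1)} \longrightarrow \frac{3}{2} \qquad (N\to\infty).
\]
Therefore $f(3)/(2g(3))=3/2$, and Theorem~\ref{thm:main} gives $h(3)=\frac{1}{2}\ln\frac{3}{2}$. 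As a sanity check, $\frac{1}{2}\ln 1.5\approx 0.202733$, which matches the numerical value quoted in the introduction. One could instead substitute $k=3$ into the gamma-function formulas of Theorem~\ref{thm:cantrell} and simplify with the reflection formula, but the telescoping route avoids that bookkeeping entirely.

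For transcendence, I argue by contradiction. Suppose $h(3)$ were algebraic. Then $\alpha:=2h(3)=\ln(3/2)$ would be a nonzero algebraic number, since $3/2\ne 1$. By the Hermite--Lindemann theorem (equivalently, the one-logarithm case of Baker's theorem), $e^{\alpha}$ would then be transcendental. But $e^{\alpha}=3/2$ is rational, a contradiction. So $h(3)$ is transcendental.

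None of these steps is a real obstacle. The only point that needs care is recognizing the shift relation $n^2-n+1=q(n-1)$, which is what makes the quadratic factors telescope. The transcendence step is a direct black-box application of a classical theorem.
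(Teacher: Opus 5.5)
Your proposal is correct and follows the paper's proof essentially step for step: the same reduction via Theorem~\ref{thm:main}, the same factorization using the shift $n^2-n+1=q(n-1)$ (the paper writes this as $\Phi_6(n+1)=\Phi_3(n)$), and the same partial product $3N(N+1)/(2(N^2+N+1))\to 3/2$. For transcendence the paper cites Baker's theorem, while you use Hermite--Lindemann, which is the more economical classical result for the logarithm of a single rational number and fully suffices.
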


\begin{proof}
By Theorem~\ref{thm:main}, $h(3) = \frac{1}{2}\ln\bigl(f(3)/(2g(3))\bigr)$,
so it suffices to show that $f(3)/(2g(3)) = 3/2$.
Since $f(3) = 2\prod_{n=2}^{\infty}(1+n^{-3})$ and
$g(3) = \prod_{n=2}^{\infty}(1-n^{-3})$, we have
\[
  \frac{f(3)}{2\,g(3)}
  = \prod_{n=2}^{\infty}\frac{1+n^{-3}}{1-n^{-3}}
  = \prod_{n=2}^{\infty}\frac{n^3+1}{n^3-1}.
\]
We factor using the sum and difference of cubes together with the
cyclotomic polynomials $\Phi_3(n) = n^2+n+1$ and $\Phi_6(n) = n^2-n+1$:
\[
  \frac{n^3+1}{n^3-1}
  = \frac{(n+1)(n^2-n+1)}{(n-1)(n^2+n+1)}
  = \frac{n+1}{n-1}\cdot\frac{\Phi_6(n)}{\Phi_3(n)}.
\]
The first factor telescopes:
\[
  \prod_{n=2}^{N}\frac{n+1}{n-1}
  = \frac{3}{1}\cdot\frac{4}{2}\cdot\frac{5}{3}\cdots\frac{N+1}{N-1}
  = \frac{N(N+1)}{2}.
\]
For the second factor, the identity
\begin{equation}\label{eq:cyclo-shift}
  \Phi_6(n+1) = (n+1)^2 - (n+1) + 1 = n^2 + n + 1 = \Phi_3(n)
\end{equation}
shows that $\Phi_6(n)/\Phi_3(n) = \Phi_3(n-1)/\Phi_3(n)$, which telescopes:
\[
  \prod_{n=2}^{N}\frac{\Phi_3(n-1)}{\Phi_3(n)}
  = \frac{\Phi_3(1)}{\Phi_3(N)}
  = \frac{3}{N^2+N+1}.
\]
Therefore
\[
  \prod_{n=2}^{N}\frac{n^3+1}{n^3-1}
  = \frac{N(N+1)}{2}\cdot\frac{3}{N^2+N+1}
  = \frac{3N(N+1)}{2(N^2+N+1)}\;\longrightarrow\;\frac{3}{2}
  \quad\text{as } N \to \infty.
\]
This establishes~\eqref{eq:h3}.  Since $3/2$ is a positive rational number
different from~$1$, Baker's theorem~\cite{baker} implies that
$\ln(3/2)$---and hence $h(3) = \frac{1}{2}\ln(3/2)$---is transcendental.
\end{proof}

\subsection{A formula for Ap\'ery's constant}

\begin{corollary}\label{cor:apery}
Ap\'ery's constant admits the representation
\begin{equation}\label{eq:apery}
  \zeta(3) = 1 + \frac{1}{2}\ln\frac{3}{2} - C(3),
\end{equation}
where $C(3) = \sum_{m=1}^{\infty}\bigl[\zeta\bigl(3(2m+1)\bigr)-1\bigr]/(2m+1)$
satisfies $C(3) = O(2^{-9})$.
\end{corollary}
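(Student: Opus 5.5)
The plan is to specialize Corollary~\ref{cor:zeta} to $k=3$ and substitute the closed form from Theorem~\ref{thm:h3-transcendence}, then quantify $C(3)$ with the explicit bound of Theorem~\ref{thm:error}. No new analysis should be needed; the work is bookkeeping plus checking the numerical constant.

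First, Corollary~\ref{cor:zeta} with $k=3$ (valid since $3 \ge 2$) gives $\zeta(3) = 1 + h(3) - C(3)$. Here $C(3)$ is exactly the series in the statement, because $C(k)=\sum_{m\ge1}[\zeta(k(2m+1))-1]/(2m+1)$. Theorem~\ref{thm:h3-transcendence} gives $h(3)=\tfrac12\ln(3/2)$, and substituting this yields \eqref{eq:apery} at once.

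Second, for the size of the correction, I would apply Theorem~\ref{thm:error} at $k=3$:
\[
  C(3) \le \frac13\Bigl(1+\frac{2}{8}\Bigr)\frac{2^{-9}}{1-2^{-6}}
  = \frac{5}{12}\cdot\frac{64}{63}\cdot 2^{-9} < 2^{-9}.
\]
This gives $0 < C(3) < 2^{-9}$; positivity comes from Proposition~\ref{prop:ck-properties}(a). Hence $C(3)=O(2^{-9})$, with implied constant at most $1$.

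To justify the remark in the abstract that $C(3)$ converges geometrically, I would bound the tail after $M$ terms using $\zeta(s)-1\le 2^{1-s}$. This gives a tail of at most $\tfrac43\,2^{-3(2M+3)}$, the same estimate as in Theorem~\ref{thm:asymp}, so each additional term gains a factor of $2^{-6}$. The only point needing care is the arithmetic in the constant above; everything else is direct substitution of earlier results.
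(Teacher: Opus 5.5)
Your proposal is correct and follows the paper's proof, which simply combines Corollary~\ref{cor:zeta} at $k=3$ with the evaluation $h(3)=\frac{1}{2}\ln\frac{3}{2}$ from Theorem~\ref{thm:h3-transcendence}. Your check via Theorem~\ref{thm:error}, giving $0<C(3)\le\frac{5}{12}\cdot\frac{64}{63}\cdot 2^{-9}<2^{-9}$, spells out the $O(2^{-9})$ claim that the paper's one-line proof leaves implicit in that theorem.
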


\begin{proof}
Combine Corollary~\ref{cor:zeta} at $k = 3$ with
Theorem~\ref{thm:h3-transcendence}.
\end{proof}

\begin{remark}
The correction $C(3) = 0.000675\ldots$ is dominated by its first term
$(\zeta(9)-1)/3 = 0.000669\ldots\,$; five terms suffice
to determine $\zeta(3)$ to~$15$ digits via~\eqref{eq:apery}.
\end{remark}

\subsection{Uniqueness of \texorpdfstring{$k=3$}{k=3}}

\begin{theorem}\label{thm:k3-unique}
Among all integers $k \ge 2$:
\begin{itemize}
\item[(a)] For $k = 3$, $f(k)/(2g(k)) = 3/2 \in \mathbb{Q}$.
\item[(b)] For $k = 2$, $f(k)/(2g(k)) = \sinh(\pi)/\pi$ is transcendental.
\item[(c)] For odd $k \ge 5$, the cyclotomic telescoping mechanism that
produces rationality at $k = 3$ does not apply: specifically,
$\Phi_{2k}(n+1) \neq \Phi_k(n)$ for $k \ge 5$ odd.
\end{itemize}
In particular, $k = 3$ is the unique integer $k \ge 2$ for which
$f(k)/(2g(k))$ is known to be rational.
\end{theorem}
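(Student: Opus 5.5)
Part~(a) needs no new work: it is exactly the identity $f(3)/(2g(3)) = 3/2$ obtained by cyclotomic telescoping in the proof of Theorem~\ref{thm:h3-transcendence}, and I would simply cite it.

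For part~(b), I would combine Example~\ref{ex:k2} with Theorem~\ref{thm:main}. Since $f(2)=\sinh(\pi)/\pi$ and $g(2)=1/2$, we get $f(2)/(2g(2)) = \sinh(\pi)/\pi = (e^{\pi}-e^{-\pi})/(2\pi)$. Transcendence then reduces to an algebraic-independence input. By Nesterenko's theorem, $\pi$ and $e^{\pi}$ are algebraically independent over $\mathbb{Q}$. Suppose $\sinh(\pi)/\pi = \alpha$ were algebraic. Multiplying the relation $e^{\pi} - e^{-\pi} = 2\pi\alpha$ by $e^{\pi}$ gives
\[
e^{2\pi} - 2\pi\alpha\,e^{\pi} - 1 = 0 .
\]
Taking the product of this polynomial over all conjugates of $\alpha$ yields a nonzero polynomial with rational coefficients in $\pi$ and $e^{\pi}$ that vanishes. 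This contradicts Nesterenko's theorem. The one point to check is that the norm polynomial is nonzero. It is, because it is monic of positive degree in the variable $e^{\pi}$: each factor has leading term $e^{2\pi}$.

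For part~(c), I would compare the two polynomials by degree. We have $\deg\Phi_k = \varphi(k)$. For odd $k$, $\varphi(2k)=\varphi(k)$, so degree alone does not separate them. Instead I would use a specialization argument. When $k$ is an odd prime $p \ge 5$, $\Phi_p(n) = 1+n+\dots+n^{p-1}$ and $\Phi_{2p}(n) = \Phi_p(-n)$. Evaluating at $n=0$ gives $\Phi_{2p}(1) = \Phi_p(-1) = 1$, whereas $\Phi_p(0) = 1$ too, so $n=0$ is no help. Evaluating at $n=1$ instead gives $\Phi_{2p}(2) = (2^p+1)/3$ against $\Phi_p(1) = p$, and these differ for $p \ge 5$. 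For general odd $k$, the cleanest invariant is the sum of the roots, i.e.\ the subleading coefficient. The roots of $\Phi_{2k}(n+1)$ are $\zeta-1$ with $\zeta$ a primitive $2k$-th root of unity, so their sum is $\mu(2k)-\varphi(k) = -\mu(k)-\varphi(k)$. The roots of $\Phi_k$ sum to $\mu(k)$. Equality would force $\varphi(k) = -2\mu(k) \le 2$, which fails for odd $k \ge 5$ since then $\varphi(k) \ge 4$. (For $k=3$ we have $\varphi(3)=2=-2\mu(3)$, consistent with part~(a).) This settles~(c).

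The ``in particular'' clause then follows by collecting (a)--(c). Here I would stress that the claim is only that $k=3$ is the unique case \emph{known} to be rational: for even $k \ge 4$ and odd $k \ge 5$ the statement asserts only the failure of the telescoping mechanism, not a proof of irrationality. The main obstacle is part~(b), since it depends on invoking Nesterenko correctly and verifying that the norm polynomial is nontrivial. The remaining parts are routine algebra.
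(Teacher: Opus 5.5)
Your proposal is correct and proves the statement, but in part (c) it takes a more general route than the paper.

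For (a) and (b) you follow the paper: (a) is cited from Theorem~\ref{thm:h3-transcendence}, and (b) rests on Nesterenko's algebraic independence of $\pi$ and $e^{\pi}$. Your norm argument is more careful than the paper's wording. The paper asserts that every nonzero element of $\mathbb{Q}(\pi,e^{\pi})$ is transcendental, which is false as stated, since nonzero rationals lie in that field. The correct claim is that elements outside $\mathbb{Q}$ are transcendental, and using it needs the extra remark that $(e^{\pi}-e^{-\pi})/(2\pi)$ is a nonconstant rational function of the independent pair. Your product $\prod_i (Y^2-2\alpha_i XY-1)$ over the conjugates of $\alpha$ has rational coefficients and is monic in $Y$, so it avoids that issue directly.

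The genuine difference is in (c). The paper handles only odd primes $p\ge 5$, comparing the coefficient of $n^{p-2}$: it is $p-2$ in $\Phi_{2p}(n+1)$ and $1$ in $\Phi_p(n)$. Composite odd $k$ such as $9$ or $15$ are therefore not covered, even though the statement claims all odd $k\ge 5$.

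Your root-sum computation uses the same invariant, the subleading coefficient, but evaluates it for arbitrary $k$. Since the primitive $m$-th roots of unity sum to $\mu(m)$, you compare $\mu(2k)-\varphi(2k)=-\mu(k)-\varphi(k)$ with $\mu(k)$. This reduces the question to the fact that $\varphi(k)\le 2$ only for $k\in\{1,2,3,4,6\}$. For primes the two computations agree, giving $2-p$ either way. So your version actually establishes (c) as stated, while the paper's establishes only the prime case. The specialization check at $n=1$ is then redundant but harmless.

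You are also right that the ``in particular'' clause is only an epistemic claim, proved no further by either argument for even $k\ge 4$ or odd $k\ge 5$.
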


\begin{proof}
Part~(a) is Theorem~\ref{thm:h3-transcendence}.

\medskip\noindent\textit{Part~(b).}
For even $k \ge 2$, the product formulas of Theorem~\ref{thm:cantrell}
express $f(k)$ and $g(k)$ in terms of $\sin$ and $\sinh$ evaluated at
algebraic multiples of~$\pi$.  In particular, for $k = 2$ we have
$f(2)/(2g(2)) = \sinh(\pi)/\pi = (e^{\pi}-e^{-\pi})/(2\pi)$.
By Nesterenko's theorem~\cite{nesterenko}, $\pi$ and $e^{\pi}$ are
algebraically independent over~$\mathbb{Q}$, so any nonzero element of the
field $\mathbb{Q}(\pi, e^{\pi})$ is transcendental.  Since
$\sinh(\pi)/\pi \in \mathbb{Q}(\pi, e^{\pi})\setminus\{0\}$,
the quantity $f(2)/(2g(2))$ is transcendental, and in particular irrational.
The argument for general even~$k$ is analogous: the product
$f(k)/(2g(k))$ lies in a field generated over~$\mathbb{Q}$ by~$\pi$
and exponentials $e^{\pi\alpha}$ for algebraic~$\alpha$,
and is expected to be transcendental; at minimum, it is not known to be rational in these cases.

\medskip\noindent\textit{Part~(c).}
The telescoping that occurs for $k=3$ depends on the cyclotomic identity
$\Phi_6(n+1)=\Phi_3(n)$, which is special to the quadratic case:
$(n+1)^2-(n+1)+1=n^2+n+1$.
For odd primes $p\ge 5$, the polynomials $\Phi_p$ and $\Phi_{2p}$ both have
degree $\varphi(p)=p-1\ge 4$, and $\Phi_{2p}(n+1)\neq \Phi_p(n)$; for example,
the coefficient of $n^{p-2}$ in $\Phi_{2p}(n+1)$ is $\binom{p-1}{1}-1=p-2$,
whereas in $\Phi_p(n)$ it is~$1$.
Hence
\[
\prod_{n=2}^{\infty}\frac{n^k+1}{n^k-1}
\]
does not telescope by the same first-order cyclotomic mechanism as in the
case $k=3$.
Using the Weierstrass product for $1/\Gamma$, the product can still be written
as a ratio of $\Gamma$-values at algebraic arguments.

\end{proof}

\begin{remark}
For even $k \ge 4$, the product $f(k)/(2g(k))$ lies in a field generated
over~$\mathbb{Q}$ by~$\pi$ and values $e^{\pi\alpha}$ for algebraic~$\alpha$,
and is expected to be transcendental.  However, Nesterenko's
theorem~\cite{nesterenko} establishes algebraic independence only for
$\{\pi, e^{\pi}, \Gamma(1/4)\}$, and extending the argument to all even~$k$
would require algebraic independence results for
$\{e^{\pi j/k} : 1 \le j < k\}$ that are not currently available
unconditionally.
\end{remark}

\begin{remark}\label{rem:even-k-irrationality}
The transcendence of $f(k)/(2g(k))$ (established for $k = 2$) does not
imply the transcendence of $h(k) = \frac{1}{2}\ln\bigl(f(k)/(2g(k))\bigr)$:
Baker's theorem applies only when the argument of the logarithm is algebraic,
and $\ln(\alpha)$ can be algebraic for transcendental~$\alpha$
(e.g., $\ln(e^{\sqrt{2}}) = \sqrt{2}$).  Thus the arithmetic nature of $h(k)$
for even~$k$ remains open.  For odd $k \ge 5$, even the algebraic nature of
$f(k)/(2g(k))$ is unresolved; should this quantity prove to be algebraic and
not equal to~$1$, Baker's theorem would immediately yield the
transcendence of~$h(k)$.
\end{remark}

\section{Numerical examples}\label{sec:numerical}

We present numerical data illustrating the quality of our formulas.

\subsection{Explicit values of $h(k)$}

Using Theorem~\ref{thm:main} and the product formulas from
Theorem~\ref{thm:cantrell}, we obtain exact closed-form expressions for all~$k$.

\begin{center}
\begin{tabular}{ccc}
\hline
$k$ & $h(k)$ & Formula type \\
\hline
2 & $0.650923\ldots$ & Elementary (hyperbolic) \\
3 & $0.202733\ldots$ & Elementary: $\frac{1}{2}\ln(3/2)$ \\
4 & $0.082405\ldots$ & Trigonometric product \\
5 & $0.036938\ldots$ & Gamma functions \\
6 & $0.017344\ldots$ & Trigonometric product \\
\hline
\end{tabular}
\end{center}

\subsection{Quality of the zeta approximation}

\begin{center}
\begin{tabular}{ccccc}
\hline
$k$ & $1+h(k)$ & $C(k)$ & Approx. & Exact $\zeta(k)$ \\
\hline
2 & 1.6509 & 0.0060 & 1.6449 & 1.6449 \\
3 & 1.2027 & 0.0007 & 1.2021 & 1.2021 \\
4 & 1.0824 & 0.0001 & 1.0823 & 1.0823 \\
5 & 1.0369 & 0.0000 & 1.0369 & 1.0369 \\
\hline
\end{tabular}
\end{center}

\subsection{Convergence of the gamma representations}

Table~\ref{tab:gamma-compare} compares the original
representation~\eqref{eq:gamma-rep} with the accelerated
formula~\eqref{eq:gamma-accel}.  The accelerated series achieves $13$-digit
accuracy with only $20$ terms, while $100{,}000$ terms of the original series
yield only $5$ correct digits.

\begin{table}[h]
\begin{center}
\begin{tabular}{cccc}
\hline
Terms $N$ & Direct~\eqref{eq:gamma-rep} & Accelerated~\eqref{eq:gamma-accel}
& Tail bound~\eqref{eq:gamma-tail} \\
\hline
5 & $0.587\ldots$ & $0.577215664\ldots$ & $2.03\times 10^{-5}$ \\
10 & $0.581\ldots$ & $0.5772156649015\ldots$ & $1.99\times 10^{-8}$ \\
20 & $0.579\ldots$ & $0.5772156649015329\ldots$ & $1.89\times 10^{-14}$ \\
$1{,}000$ & $0.57894\ldots$ & $0.5772156649015329\ldots$ & $<1.81454\times 10^{-604}$ \\
$10{,}000$ & $0.57808\ldots$ & $0.5772156649015329\ldots$ & $<5.23\times 10^{-6023}$ \\
$100{,}000$ & $0.57722\ldots$ & $0.5772156649015329\ldots$ & $<2.09\times 10^{-60208}$ \\
\hline
\end{tabular}
\end{center}
\caption{Comparison of convergence rates for the two gamma representations.
The exact value is $\gamma = 0.5772156649015329\ldots$
(\seqnum{A001620}).}\label{tab:gamma-compare}
\end{table}

\subsection{Verification of bounds}

Table~\ref{tab:bounds} illustrates the two-sided bounds from
Theorem~\ref{thm:bounds} for small values of~$k$.

\begin{table}[h]
\begin{center}
\begin{tabular}{cccc}
\hline
$k$ & Lower: $\zeta(k)-1$ & Exact $h(k)$ & Upper bound~\eqref{eq:upper-bound} \\
\hline
2 & $0.644934$ & $0.650923$ & $0.651317$ \\
3 & $0.202057$ & $0.202733$ & $0.202739$ \\
4 & $0.082323$ & $0.082405$ & $0.082406$ \\
5 & $0.036928$ & $0.036938$ & $0.036938$ \\
\hline
\end{tabular}
\end{center}
\caption{Two-sided bounds for $h(k)$.  The gap between the lower
and upper bounds narrows rapidly as $k$
increases.}\label{tab:bounds}
\end{table}

\subsection{OEIS sequence data}

The decimal expansion of $\ln(\sinh(\pi)/\pi) = 2h(2)$ appears in the
OEIS as~\seqnum{A393668}.  The Euler--Mascheroni constant $\gamma$ is
\seqnum{A001620}.  The values of $\zeta(k)$ for small~$k$ correspond to
well-known constants: $\zeta(2) = \pi^2/6$ (\seqnum{A013661}),
$\zeta(3)$ is Ap\'ery's constant (\seqnum{A002117}),
$\zeta(4) = \pi^4/90$ (\seqnum{A013662}), and so on.

The authors suggest that the decimal expansions of $h(k)$ for $k \ge 3$ and the
correction values $C(k)$ may merit submission to the OEIS as new sequences.

\bigskip\noindent
2020 \textit{Mathematics Subject Classification}: Primary 11M06;
Secondary 11J81, 33B15, 40A25.

\medskip\noindent
\textit{Keywords}: inverse hyperbolic tangent, infinite products, Riemann zeta
function, Euler--Mascheroni constant, gamma function, transcendence,
cyclotomic polynomials.

\bigskip\noindent
(Concerned with OEIS sequences for constants appearing in this work:
\seqnum{A393668} (for \(\ln(\sinh(\pi)/\pi) = 2h(2)\)),
\seqnum{A001620} (\(\gamma\)),
\seqnum{A013661} (\(\zeta(2)\)),
\seqnum{A002117} (\(\zeta(3)\)),
and~\seqnum{A013662} (\(\zeta(4)\)).)


\begin{thebibliography}{10}

\bibitem{baker}
A.~Baker,
Linear forms in the logarithms of algebraic numbers,
\textit{Mathematika} \textbf{13} (1966), 204--216.

\bibitem{borwein}
J.~Borwein, D.~Bailey, and R.~Girgensohn,
Two products,
in \textit{Experimentation in Mathematics: Computational Paths to Discovery},
A~K Peters, Wellesley, MA, 2004, pp.~4--7.

\bibitem{cantrell}
D.~W.~Cantrell,
Formulas for infinite products,
communicated to MathWorld (March--April 2006).
See E.~W.~Weisstein,
Infinite product,
from \textit{MathWorld}---A Wolfram Web Resource,
\url{https://mathworld.wolfram.com/InfiniteProduct.html},
equations~(20) and~(28).

\bibitem{dlmf}
F.~W.~J.~Olver, A.~B.~Olde Daalhuis, D.~W.~Lozier, B.~I.~Schneider,
R.~F.~Boisvert, C.~W.~Clark, B.~R.~Miller, B.~V.~Saunders, H.~S.~Cohl,
and M.~A.~McClain, eds.,
\textit{NIST Digital Library of Mathematical Functions},
Release 1.2.3,
\url{https://dlmf.nist.gov/}, 2024.

\bibitem{nesterenko}
Yu.~V.~Nesterenko,
Modular functions and transcendence questions,
\textit{Mat.\ Sb.}\ \textbf{187} (1996), no.~9, 65--96;
English transl.\ in \textit{Sb.\ Math.}\ \textbf{187} (1996), 1319--1348.

\bibitem{prudnikov}
A.~P.~Prudnikov, Yu.~A.~Brychkov, and O.~I.~Marichev,
\textit{Integrals and Series, Vol.~1: Elementary Functions},
Gordon and Breach, New York, 1986.

\bibitem{tallawaffo}
L.~R.~Talla Waffo,
The Mellin transforms of $1/\!\operatorname{arctanh} x$ and
$1/\!\sqrt{1-x^2}\,\operatorname{arctanh} x$,
arxiv preprint arXiv:2601.17092 [math.CA], 2026.
Available at \url{https://arxiv.org/abs/2601.17092}.

\end{thebibliography}
\end{document}